\tikzstyle{vertex}=[circle,draw=black,fill=black,inner sep=0,minimum size=3pt,text=white,font=\footnotesize]
\newtheorem{definition}{Definition}[section]
\newtheorem{claim}{Claim}
\newtheorem{theorem}[definition]{Theorem}
\newtheorem{lemma}[definition]{Lemma}
\newtheorem{question}[definition]{Question}
\numberwithin{equation}{section}
\newcommand{\ignore}[1]{}
\newcommand*\patchAmsMathEnvironmentForLineno[1]{%
  \expandafter\let\csname old#1\expandafter\endcsname\csname #1\endcsname
  \expandafter\let\csname oldend#1\expandafter\endcsname\csname end#1\endcsname
  \renewenvironment{#1}%
     {\linenomath\csname old#1\endcsname}%
     {\csname oldend#1\endcsname\endlinenomath}}%
\newcommand*\patchBothAmsMathEnvironmentsForLineno[1]{%
  \patchAmsMathEnvironmentForLineno{#1}%
  \patchAmsMathEnvironmentForLineno{#1*}}%
\def\inst#1{$^{#1}$}
\begin{document}

\title{The number of $k$-dimensional corner-free subsets of grids}
\author{Younjin Kim\inst{1}
\thanks{The author was supported by Basic Science Research Program through the National Research Foundation of Korea(NRF) funded by the Ministry of Education (2017R1A6A3A04005963)}}

\maketitle

\begin{center}
{\footnotesize
\inst{1} 
Department of Mathematics, POSTECH, Pohang, South Korea \\
\texttt{mathyounjinkim@gmail.com}
\\\ \\
}
\end{center}

\begin{abstract}
A subset $A$ of the $k$-dimensional grid $\{1,2, \dots, N\}^k$  is said to be $k$-{\it dimensional corner-free} if it does not contain a set of points of the  
form $\{ \bold{a} \} \cup \{ \bold{a} + de_i : 1 \leq i \leq k \}$ for some $\bold{a} \in \{1,2, \dots, N\}^k$ and  $d > 0$,  where $e_1,e_2, \cdots, e_k$ is the standard basis of $\mathbb{R}^k$. We define the maximum size of a $k$-dimensional corner-free subset of  $\{1,2, \dots, N\}^k$ as $c_k(N)$. In this paper, we show that the number of  $k$-dimensional corner-free subsets of the $k$-dimensional grid $\{1,2, \dots, N\}^k$ is at most $2^{O(c_k(N))}$ for infinitely many values of $N$. 
Our main tools for proof are the hypergraph container method and the  supersaturation result for $k$-dimensional corners in sets of size $\Theta(c_k(N))$.
\end{abstract}

\section{Introduction}

In 1975, Szemer\'edi~\cite{Z2} proved that for every real number $\delta > 0 $ and every positive integer $k$, there exists a positive integer $N$ such that every subset $A$ of the set $\{1, 2, \cdots, N \}$ with $|A| \geq \delta N$ contains an arithmetic progression of length $k$. There has been a plethora of research related to Szemer\'edi's theorem mixing methods in many areas of
 mathematics.  Szemer\'edi's original proof is a tour de force of involved  combinatorial arguments.
 There have been now
alternative proofs of Szemer\'edi's theorem by Furstenberg~\cite{FK} using methods from ergodic theory, and by Gowers~\cite{G1} using high order Fourier analysis. The case $k=3$ was proven earlier by Roth~\cite{RO}.\\

A subset $A$ of the set $\{1,2, \dots, N\}$  is said to be $k$-{\it AP-free} if it does not contain an arithmetic progression of length $k$.
We define the maximum size of a $k$-AP-free subset of $\{1,2, \dots, N\}$ as $r_k(N)$. In 1990, Cameron and Erd\H{o}s~\cite{CE} were interested in  counting the number of subsets of the set $\{1,2, \dots, N\}$ which do not contain an arithmetic progression of length $k$ and asked the following question.

\begin{question}[Cameron and Erd\H{o}s~\cite{CE}]
For every positive integer $k$ and $N$, is it true that the number of $k$-AP free subsets of $\{1,2, \dots, N\}$ is 
$2^{(1+o(1)) r_k(N)}?$
\end{question}

Until recently,  research on how to improve the bounds  $r_k(N)$ has been studied by many authors~\cite{BE,BS,EL,OB,G0,G1}. Despite much effort, the difference between the currently known lower and upper bounds of $r_3(N)$ is still quite large. The upper bound has improved gradually over the years, and the current best upper bound is due to Bloom and Sisask~\cite{BS}:

$$ r_3(N) \leq \frac{N}{(\log N)^{1+c}},$$

\noindent where $c>0$ is an absolute constant.\\

\noindent For a lower bound of $r_3(N)$, the configuration of Behrend~\cite{BE}  shows:

$$ r_3(N) = \Omega\Big(\frac{N}{2^{2\sqrt{2}\sqrt{\log_2 N}} \cdot \log^{\frac{1}{4}} N}\Big).$$

\noindent This has been improved by Elkin's modification~\cite{EL} by a factor of $\sqrt{\log n}$.\\

\noindent The currently known lower and upper bounds for $r_k(N)$ are as follows:\\

\noindent Let $m = \lceil \log_2 k\rceil$. For $k \geq 4$, there exist $c_k,c'_k > 0$ such that
$$  c_k \cdot N \cdot (\log N)^{1/2m}  \cdot 2^{-m2^{(m-1)/2(\log n)^{1/m}}}\leq r_k(N) \leq  \frac{N}{(\log \log N)^{c'_k}},$$

\noindent where the lower bound is due to O'Bryant~\cite{OB} and the upper bound is due to Gowers~\cite{G0,G1}.\\

In 2017,  Balogh, Liu, and Sharifzadeh~\cite{BLS} provided a weaker version of  Cameron and Erd\H{o}s's conjecture~\cite{CE} that the number of subsets of the set $\{1,2, \dots, N\}$ without 
an  arithmetic progression of length $k$ is at most $2^{O(r_k(N))}$ for infinitely many values of $N$, which is optimal up to a constant factor in the exponent.\\

A triple of points in the $2$-dimensional grid $\{1,2, \dots, N\}^2$  is called a {\it  corner} if it is of the form  $(a_1,a_2), (a_1+d, a_2), (a_1, a_2+d)$ for some $a_1, a_2 \in \{1,2, \dots, N\}$ and  $d > 0$. In 1974, Ajtai and Szemer\'edi~\cite{AS} discovered that for every number $\delta > 0$,  there exists a positive integer $N$ such that every subset $A$ of the $2$-dimensional grid $\{1,2, \dots, N\}^2$ with $|A| \geq \delta N^2$ contains a corner.  In 1991, F\"{u}rstenberg and Katznelson~\cite{FK1} found that  their more general theorem  implied the result of  Ajtai and Szemer\'edi~\cite{AS}, but did not specify an explicit bound for $N$ as it uses ergodic theory. An easy consequence of their result is the case $k=3$ of Szemer\'edi's theorem, which was first proved by Roth~\cite{RO} using Fourier analysis. 
Afterward, in
2003, Solymosi~\cite{S1} provided a simple proof for Ajtai and Szemer\'edi~\cite{AS} theorem using the Triangle Removal Lemma.  \\

A subset $A$ of the $2$-dimensional grid $\{1,2, \dots, N\}^2$  is called {\it corner-free} 
if it does not contain a corner.  We define the maximum size of corner-free sets in $\{1,2, \dots, N\}^2$ as  $c_2(N)$. 
The problem of  improving the bounds for  $c_2(N)$  has been studied by many authors~\cite{GR1,LS,SH,SH1}. 
The current best lower bound of $c_2(N)$ is due to Green~\cite{GR1}, based on  Linial and Shraibman's construction~\cite{LS}:

$$ \frac{N^2}{2^{(l_1+o(1))\sqrt{\log_2 N}}} \leq c_2(N),$$

\noindent where $l_1 \approx 1.822$. \\

\noindent The current best upper bound of $c_2(N)$ is due to Shkredov~\cite{SH}:
 
 $$ c_2(N) \leq \frac{N^2}{(\log \log N)^{l_2}},$$
 
 \noindent where $l_2 \approx 0.0137$. \\

The higher dimensional analog of a corner in the $2$-dimensional grid $\{1,2, \dots, N\}^2$ is the following.
A subset $A$ of the $k$-dimensional grid $\{1,2, \dots, N\}^k$  is called  $k$-{\it  dimensional corner} if it is
 a set of points of the  
form $\{ \bold{a} \} \cup \{ \bold{a} + de_i : 1 \leq i \leq k \}$  for some $\bold{a} \in \{1,2, \dots, N\}^k$ and  $d > 0$,  where $e_1,e_2, \cdots, e_k$ is the standard basis of $\mathbb{R}^k$.  The following multidimensional version of Ajtai and Szemer\'edi theorem~\cite{AS} was proved by F\"{u}rstenberg, Katznelson~\cite{FK1}, and Gowers~\cite{G2}.\\

\begin{theorem}[~\cite{FK1,G2}]\label{ktheorem}
For every number $\delta > 0$ and  every positive integer $k$,  there exists a positive integer $N$ such that every subset $A$ of the $k$-dimensional grid $\{1,2, \dots, N\}^k$ with $|A| \geq \delta N^k$ contains a $k$-dimensional corner.\\
\end{theorem}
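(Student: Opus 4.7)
The plan is to deduce Theorem \ref{ktheorem} from the $k$-uniform hypergraph removal lemma (proved by Gowers, and independently by R\"odl, Nagle, Schacht, and Skokan): if a $(k+1)$-partite $k$-uniform hypergraph $\cH$ on parts of size $n$ contains $o(n^{k+1})$ simplices, that is, copies of the complete $(k+1)$-partite $k$-uniform hypergraph $K_{k+1}^{(k)}$ with one vertex per part, then all of them can be destroyed by deleting $o(n^k)$ hyperedges. This generalises Solymosi's triangle-removal proof of the $2$-dimensional corner theorem to higher uniformities.

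Given $A \subseteq \{1, \ldots, N\}^k$, I would construct a $(k+1)$-partite $k$-uniform hypergraph $\cH$ as follows. Take classes $V_1, \ldots, V_k$, each a copy of $\{1, \ldots, N\}$ encoding the $k$ coordinates, and an auxiliary class $V_0$, a copy of $\{k, \ldots, kN\}$, encoding the coordinate sum. For each $\mathbf{a} = (a_1, \ldots, a_k) \in A$, attach to $\cH$ the $k+1$ hyperedges obtained by dropping, in turn, each entry of the extended tuple $(a_0, a_1, \ldots, a_k)$, where $a_0 := a_1 + \cdots + a_k$. A direct calculation then identifies the simplices of $\cH$ with tuples $(\mathbf{a}, d)$ satisfying $\{\mathbf{a}, \mathbf{a} + de_1, \ldots, \mathbf{a} + de_k\} \subseteq A$: the simplex on $(v_0, v_1, \ldots, v_k)$ corresponds to the configuration with base $\mathbf{a} = (v_1, \ldots, v_k)$ and step $d = v_0 - (v_1 + \cdots + v_k)$. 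Trivial simplices (those with $d = 0$) are in bijection with $A$, and a short check shows that each hyperedge of $\cH$ belongs to at most one trivial simplex.

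Assuming $|A| \geq \delta N^k$ with $A$ corner-free, the $|A|$ trivial simplices alone require at least $|A| \geq \delta N^k$ hyperedge deletions to destroy, so the removal lemma guarantees at least $\delta' N^{k+1}$ simplices in $\cH$ for some $\delta' = \delta'(\delta) > 0$; since at most $|A| \leq N^k$ of these can be trivial, the remaining simplices yield non-degenerate corner configurations in $A$. The main technical obstacle is that the correspondence above permits $d$ to be negative, producing ``anti-corners'' that are reflections, rather than translates, of standard corners; to force $d > 0$ one must refine the construction, for example by restricting $V_0$ to an appropriate sub-range, by replacing the sum-parametrisation by one that encodes the step $d$ directly, or by a symmetrisation argument applied to $A$ together with its reflection $\{(N+1-a_1, \ldots, N+1-a_k) : \mathbf{a} \in A\}$. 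The deepest input, however, is the removal lemma itself, whose proof rests on the hypergraph regularity method for $k$-uniform hypergraphs.
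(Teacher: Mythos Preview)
The paper does not prove Theorem~\ref{ktheorem}; it is quoted as a background result with citations to F\"urstenberg--Katznelson and Gowers, and the surrounding text merely sketches its history (ergodic proof first, then Gowers's hypergraph-regularity proof). There is therefore no in-paper proof to compare your proposal against.

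That said, your outline is precisely the Solymosi--Gowers route the paper alludes to when it mentions Solymosi's triangle-removal argument for $k=2$ and Gowers's hypergraph regularity proof for general $k$. The construction is correct: the $(k+1)$-partite $k$-uniform hypergraph built from $(a_0,a_1,\dots,a_k)$ with $a_0=\sum_i a_i$ has simplices in bijection with pairs $(\mathbf a,d)$ satisfying $\{\mathbf a\}\cup\{\mathbf a+de_i\}\subseteq A$, the trivial simplices ($d=0$) are pairwise edge-disjoint because each $k$-edge determines its missing coordinate via the linear relation, and the removal lemma then forces $\Omega(N^{k+1})$ simplices, hence a nontrivial one once $N$ is large. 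Your identification of the sign-of-$d$ issue is also accurate: the simplex argument natively yields a configuration with $d\neq 0$, while the paper's definition insists on $d>0$. Among your suggested fixes, the cleanest is the reflection $\mathbf a\mapsto (N{+}1)\mathbf 1-\mathbf a$, which swaps the sign of $d$; applying the $d\neq 0$ conclusion to $A$ and, if needed, to its reflection handles both cases. So the proposal is sound, with the caveat that you are importing the full strength of the hypergraph removal lemma, which is by far the deepest ingredient and is exactly what \cite{G2,NRS} supply.
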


In 1991,   F\"{u}rstenberg and Katznelson~\cite{FK1} showed that their more general theorem  implied Theorem~\ref{ktheorem}, but did not specify an explicit bound  as it uses ergodic theory.
 Later, in 2007, Gowers~\cite{G2} provided the first proof with
explicit bounds and the first proof of Theorem~\ref{ktheorem}  not based on F\"{u}rstenberg's ergodic-theoretic approach. They also proved that Theorem~\ref{ktheorem} implied  the multidimensional Szemer\'edi theorem.\\

Another fundamental result in additive combinatorics is the multidimensional Szemer\'edi theorem, which was demonstrated for the  first time by F\"{u}rstenberg and Katznelson~\cite{FK} using the ergodic method, but provided no explicit bounds.
   In 2007, Gowers~\cite{G2} yielded a combinatorial proof of the multidimensional Szemer\'edi theorem by establishing the Regularity  and Counting Lemmas for the $r$-uniform hypergraph. This is the first proof to provide an explicit bound. Similar results were obtained independently by Nagle, R\" odl, and Schacht~\cite{NRS}.\\

\begin{theorem}[Multidimensional Szemer\'edi theorem~\cite{FK,G2,NRS}]
For every real number $\delta > 0$, every positive integer $k$, and every finite set $X \subset \mathbb{Z}^k$, there exists a positive integer $N$ such that every subset $A$ of the $k$-dimensional grid $\{1,2, \dots, N\}^k$ with $|A| \geq \delta N^k$ contains a subset of the form $\bold{a}+dX$ for some $\bold{a} \in \{1,2, \dots, N\}^k$ and  $d > 0$.\\ \end{theorem}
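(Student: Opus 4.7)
The plan is to derive the multidimensional Szemer\'edi theorem from Theorem~\ref{ktheorem} (the $m$-dimensional corner theorem) by a linear projection argument in the spirit of Gowers~\cite{G2}. Fix $X = \{x_1,\ldots,x_m\} \subseteq \mathbb{Z}^k$. The idea is to lift a given dense $A \subseteq \{1,\ldots,N\}^k$ to a comparably dense set in the higher-dimensional grid $\{1,\ldots,N'\}^m$, apply Theorem~\ref{ktheorem} in dimension $m$ to produce an $m$-dimensional corner there, and then push that corner back down through a linear map to obtain a translate $\mathbf{a} + dX \subseteq A$.

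The central gadget is the linear map $\pi \colon \mathbb{Z}^m \to \mathbb{Z}^k$ defined by $\pi(t_1,\ldots,t_m) = t_1 x_1 + \cdots + t_m x_m$. For any base point $\mathbf{b} \in \mathbb{Z}^m$ and dilation $d > 0$,
\[
\pi\bigl(\{\mathbf{b}\} \cup \{\mathbf{b} + d e_i : 1 \leq i \leq m\}\bigr) \;=\; \{\pi(\mathbf{b})\} \cup \{\pi(\mathbf{b}) + d x_i : 1 \leq i \leq m\},
\]
so finding an $m$-dimensional corner inside $\pi^{-1}(A)$ immediately yields a translate $\pi(\mathbf{b}) + dX \subseteq A$ (take $\mathbf{a} := \pi(\mathbf{b})$). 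I may assume without loss of generality that $X$ spans $\mathbb{Q}^k$; otherwise I first restrict $A$ to a coset of $\mathrm{span}_{\mathbb{Q}}(X)$ on which, by pigeonhole, $A$ is still dense, and I induct on $k$.

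Set $C := \max_i \|x_i\|_\infty$ and $N' := \lfloor N/(mC) \rfloor$, so that $\pi(\{1,\ldots,N'\}^m)$ sits inside a translate of $\{1,\ldots,N\}^k$ and $A' := \pi^{-1}(A) \cap \{1,\ldots,N'\}^m$ is well-defined. A standard lattice-point count (e.g.\ by putting the matrix whose rows are $x_1,\ldots,x_m$ into Smith normal form, or a direct Ehrhart-type argument) shows that for $\mathbf{y}$ in the bulk of $\pi(\{1,\ldots,N'\}^m)$ each non-empty fiber $\pi^{-1}(\mathbf{y}) \cap \{1,\ldots,N'\}^m$ has size $\Theta((N')^{m-k})$. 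Hence $|A'| \geq c_X \delta (N')^m$ for some constant $c_X > 0$ depending only on $X$. Applying Theorem~\ref{ktheorem} in dimension $m$ with density parameter $c_X \delta$ then produces, for $N$ (and hence $N'$) sufficiently large in terms of $\delta$ and $X$, an $m$-dimensional corner inside $A'$, and projecting through $\pi$ yields $\pi(\mathbf{b}) + dX \subseteq A$ with $d > 0$.

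The principal obstacle is the fiber-density estimate: one must control $|\pi^{-1}(\mathbf{y}) \cap \{1,\ldots,N'\}^m|$ uniformly as $\mathbf{y}$ ranges over the bulk of the image, and handle the boundary region where fibers shrink. The resulting constant $c_X$ depends on the determinant of the integer matrix $(x_1 \mid \cdots \mid x_m)$ and on the geometry of $X$, so the quantitative bound on $N$ deteriorates with the complexity of $X$---which is to be expected, since Theorem~\ref{ktheorem} itself feeds an effective (but very weak) bound into the reduction. Everything else---the combinatorics of the projected corner, the coset-restriction induction handling non-spanning $X$, and the positivity $d > 0$ coming for free from the definition of a corner---amounts to a routine check.
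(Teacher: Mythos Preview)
The paper does not supply its own proof of this theorem; it is quoted as a known background result, with the remark that Gowers~\cite{G2} showed it follows from Theorem~\ref{ktheorem}. So there is no in-paper argument to compare your sketch against, only the standard reduction it alludes to.

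Your reduction via the linear map $\pi$ is indeed that standard route, and the key observation---that an $m$-dimensional corner $\{\mathbf{b}\}\cup\{\mathbf{b}+de_i\}$ in $\pi^{-1}(A)$ projects to $\pi(\mathbf{b})+dX\subseteq A$---is correct. But the density step has a real gap. You assert that the fiber estimate $|\pi^{-1}(\mathbf{y})\cap\{1,\ldots,N'\}^m|=\Theta\bigl((N')^{m-k}\bigr)$ for $\mathbf{y}$ in the bulk of the image gives $|A'|\ge c_X\delta(N')^m$. It does not: the zonotope $\pi(\{1,\ldots,N'\}^m)$ need only sit inside a translate of $\{1,\ldots,N\}^k$, not cover a positive proportion of it, and $A$ may miss it entirely. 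Concretely, take $k=1$, $X=\{1,2\}$, so $C=2$ and $N'=\lfloor N/4\rfloor$; then $\pi(\{1,\ldots,N'\}^2)\subseteq\{3,\ldots,3N'\}\subseteq\{1,\ldots,\lfloor 3N/4\rfloor\}$, and any $A\subseteq\{\lfloor 3N/4\rfloor+1,\ldots,N\}$ of density about $1/4$ gives $A'=\varnothing$. A uniform fiber count says nothing when every fiber over a point of $A$ is empty.

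The repair is the standard averaging over shifts that your sketch omits: for $c\in\{-N,\ldots,N\}^k$ set $A'_c:=\{t\in\{1,\ldots,N'\}^m:\pi(t)+c\in A\}$ and observe $\sum_c|A'_c|=(N')^m|A|$, since for each fixed $t$ the set $A-\pi(t)$ lies inside $\{-N,\ldots,N\}^k$. Hence some $c$ satisfies $|A'_c|\ge c_X\delta(N')^m$, a corner in $A'_c$ projects to $(\pi(\mathbf{b})+c)+dX\subseteq A$, and the argument closes. With this averaging inserted, your outline is correct, modulo the routine check that the base point $\mathbf{a}$ can be taken in $\{1,\ldots,N\}^k$ (automatic if $0\in X$, otherwise handled by translating $X$ so that $0\in X$ before starting).
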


A subset $A$ of the $k$-dimensional grid $\{1,2, \dots, N\}^k$  is called  $k$-{\it dimensional corner-free} 
if it does not contain a $k$-dimensional corner. We define the maximum size of a $k$-dimensional corner-free subset of  $\{1,2, \dots, N\}^k$ as $c_k(N)$. 
In this paper, we study a natural higher dimensional version of the question of Cameron and Erd\H{o}s, i.e. counting $k$-dimensional corner-free sets in $\{1,2, \dots, N\}^k$ as follows.\\

\begin{question}
For every positive integer $k$ and $N$, is it true that the number of $k$-dimensional corner-free subsets of the $k$-dimensional grid $\{1,2, \dots, N\}^k $is 
$2^{(1+o(1)) c_k(N)}?$\\
\end{question}

In addressing  this question, we show the following theorem. Similar to the results of Balogh, Liu, and Sharifzadeh~\cite{BLS}, despite not knowing the value of the extremal function $c_k(N)$, we can derive a counting result that is optimal up to a constant factor in the exponent.\\
  
  \begin{theorem}\label{main:thm}
The number of $k$-dimensional corner-free subsets of the $k$-dimensional grid $\{1,2, \dots, N\}^k$ is 
$2^{O(c_k(N))}$ for infinitely many values of $N$.\\  \end{theorem}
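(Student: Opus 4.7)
The plan is to realize $k$-dimensional corner-free subsets of $\{1,\ldots,N\}^k$ as independent sets in a $(k+1)$-uniform hypergraph $H$ with vertex set $V(H)=\{1,\ldots,N\}^k$ and edge set consisting of all corners $\{\mathbf{a}\}\cup\{\mathbf{a}+de_i : 1\le i\le k\}$. This hypergraph has $N^k$ vertices, roughly $N^{k+1}$ edges, and every vertex lies in $\Theta(N)$ edges. The number of corner-free sets is exactly the number of independent sets $i(H)$, so I would attack the problem with the hypergraph container method of Balogh--Morris--Samotij and Saxton--Thomason, whose goal is to produce a small family of containers, each of size $O(c_k(N))$.

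The first key ingredient is a supersaturation lemma: there exists a constant $c=c(k)>0$ such that every $A\subseteq\{1,\ldots,N\}^k$ with $|A|\ge \gamma\cdot c_k(N)$, for any $\gamma\ge 1$, contains at least $c\,\gamma^{k+1}$ corners. The natural proof is an Erd\H{o}s--Simonovits-style sampling argument: choose a uniformly random $m$-subset $A'\subseteq A$ with $m=c_k(N)+1$, observe that by the definition of $c_k(N)$ the subset $A'$ must contain a corner, and convert $\mathbb{E}[\#\text{corners in }A']\ge 1$ into a lower bound on the number of corners in $A$ via the ratio $\binom{|A|-(k+1)}{m-(k+1)}/\binom{|A|}{m}\approx (m/|A|)^{k+1}$.

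Feeding this supersaturation estimate into the container theorem yields a family $\cC$ of containers such that every corner-free set is contained in some $C\in\cC$, $\log_2|\cC|=O(c_k(N))$, and each container is strictly less dense than the ambient grid. Iterating the container step -- each time applying supersaturation to the induced sub-hypergraph $H[C]$ and tuning the container parameter $\tau$ to the current density -- shrinks each container until $|C|\le D\cdot c_k(N)$ for some constant $D=D(k)$. Summing over containers gives at most $|\cC|\cdot 2^{Dc_k(N)}=2^{O(c_k(N))}$ corner-free sets, which is the desired bound.

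The main obstacle is balancing the iteration so that the cumulative number of containers stays at $2^{O(c_k(N))}$ while each container is eventually reduced to size $O(c_k(N))$. Since $c_k(N)$ is not known in closed form -- the best upper bounds are of the shape $N^k/(\log\log N)^{\Omega(1)}$ -- all quantitative control must come through the extremal input $c_k(N)$ itself, and closing the recursion cleanly appears to require passing to the subsequence of $N$ along which $c_k(N)$ behaves regularly (for example, where $c_k(2N)\le 2^k c_k(N)$ holds with explicit constants). This is the most plausible source of the ``infinitely many values of $N$'' qualifier in the statement, and mirrors the role of a similar subsequence condition in the arithmetic-progression counting result of Balogh, Liu, and Sharifzadeh.
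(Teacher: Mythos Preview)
Your framework is right—the paper does exactly encode corner-free sets as independent sets in the $(k+1)$-uniform hypergraph $\mathcal{G}$ and then applies the Saxton--Thomason container theorem together with a supersaturation result—but the supersaturation you propose is too weak, and iteration does not rescue it.

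The sampling argument you describe is the paper's Lemma~2.6: a set of size $K\,c_k(N)$ contains $\gtrsim K^{k+1}c_k(N)$ corners. Feed this into the container machine. The co-degree function of $\mathcal{G}$ satisfies $\Delta(\mathcal{G},\tau)\asymp (N\tau^k)^{-1}$, so the condition $\Delta(\mathcal{G},\tau)\le \epsilon/12(k+1)!$ forces $\epsilon\gtrsim (N\tau^k)^{-1}$. To keep $\log|\mathcal{C}|\lesssim N^k\tau\cdot\mathrm{polylog}\le O(c_k(N))$ you need $\tau\lesssim c_k(N)/N^k$, whence $\epsilon\gtrsim N^{k^2-1}/c_k(N)^k$. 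A container $C$ then has at most $\epsilon\cdot e(\mathcal{G})\asymp N^{k(k+1)}/c_k(N)^k$ corners, and your supersaturation only yields $|C|\lesssim c_k(N)\bigl(\epsilon N^{k+1}/c_k(N)\bigr)^{1/(k+1)}\approx N^k$—no shrinkage at all. Iterating does not help: when you pass to $\mathcal{G}[C]$ the maximum $j$-degrees $\Delta_j$ stay $O_k(1)$, but the only lower bound on the average degree of $\mathcal{G}[C]$ comes from the same weak supersaturation, giving $d(\mathcal{G}[C])\gtrsim (|C|/c_k(N))^{k}$, which is independent of $N$. The co-degree condition then forces $\tau$ so large that the per-step contribution to $\log|\mathcal{C}|$ already exceeds $c_k(N)$.

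What the paper does instead is upgrade the supersaturation by a factor of roughly $N^{k-1}$. Lemma~2.7 averages the sampling bound over all $M\times\cdots\times M$ sub-grids of $[N]^k$ with \emph{prime} step sizes up to $|A|/(2^{k+1}MN^{k-1})$, obtaining $\Gamma_k(A)\gtrsim |A|^2 K^{k+1}c_k(M)/\bigl(M^{k+1}N^{k-1}\log^2 N\bigr)$. Choosing $M=\Lambda(n)=\tfrac{n}{\log^{3k+3}n}\bigl(c_k(n)/n^k\bigr)^{k+3}$ and invoking the subsequence condition $b\,c_k(n)/n^k\ge c_k(\Lambda(n))/\Lambda(n)^k$ (Lemma~3.1) gives the superlinear bound $\Gamma_k(A)\ge \Upsilon(n)\,n^k$ with $\Upsilon(n)=\tfrac{\log^{3k+1}n}{n}\bigl(n^k/c_k(n)\bigr)^k$. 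With this $\epsilon=\Upsilon(n)$ one can take $\tau=\Psi(n)=c_k(n)/(n^k\log^3 n)$, and a \emph{single} container application already gives $|C|\le C'\,c_k(n)$ and $\log|\mathcal{C}|=o(c_k(n))$. The ``infinitely many $N$'' clause comes precisely from Lemma~3.1, not from the trivial monotonicity $c_k(2N)\le 2^k c_k(N)$ (which holds for all $N$; see Lemma~3.2). So the missing idea in your proposal is the Varnavides-type averaging over sub-grids with prime common difference, together with the scale-comparison subsequence that makes it close.
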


Our paper is organized as follows.  In Section $2$, we provide the two main tools for  proof: the hypergraph container theorem and supersaturation results for $k$-dimensional corners.
 In Section $3$, we provide  proof of the saturation result 
  for $k$-dimensional corners in sets of size $\Theta(c_k(N))$, which is specified in Section $2$. In Section $4$,  we provide  proof of our main result,
Theorem~\ref{main:thm}.\\

\section{Preliminaries}

\subsection{Hypergraph Container Method}

The hypergraph container method~\cite{BMS,ST} is a very powerful technique for bounding the number of discrete objects  avoiding certain forbidden structures. A graph is $H$-free if it does not have subgraphs that are isomorphic to $H$.
For example, we use the container method when we count the family of 
$H$-free  graphs or the family of sets without $k$ term arithmetic progression. The $r$-uniform hypergraph $\mathcal{H}$ is defined as the pair $(V(\mathcal{H}),E(\mathcal{H}))$ where $V(\mathcal{H})$ is the set of vertices and $E(\mathcal{H})$ is the set of hyperedges that are the $r$-subset of the vertices of $V(\mathcal{H})$. Let $\Gamma(\mathcal{H})$ be a collection of independent sets of   hypergraph $\mathcal{H}$, where the independent set of hypergraph $\mathcal{H}$ is   the set of vertices inducing no hyperedge in $E(\mathcal{H})$.
For a given hypergraph $\mathcal{H}$,  we define the maximum degree of a set of $l$ vertices of $\mathcal{H}$ as
$$\Delta_l(\mathcal{H}) = \max\{ \ d_{\mathcal{H}}(A) : A \subset V(\mathcal{H}),  \ |A| = l \ \}, $$

\noindent where $ d_{\mathcal{H}}(A) $ is the number of hyperedges in $E(\mathcal{H})$ containing the set $A$. \\

 Let $\mathcal{H}$ be an $r$-uniform hypergraph of order $n$ and average degree $d$. For any $ 0 < \tau < 1$, the {\it co-degree}  $ \Delta(\mathcal{H}, \tau)$  is defined as
$$ \Delta(\mathcal{H}, \tau) = 2^{{r \choose 2} -1} \sum_{j=2}^r 2^{-{j-1}\choose 2} \frac{\Delta_j(\mathcal{H})}{{\tau}^{j-1} d}.  $$\\

 In this paper, we use the following  hypergraph container lemma, which contains accurate estimates for the $r$-uniform hypergraph in Corollary 3.6 in~\cite{ST}.\\

 \begin{theorem}[Hypergraph Container Lemma~\cite{ST}]\label{Container1} For every positive integer $r\in \mathbb{N}$, let $\mathcal{H} \subseteq {{V} \choose {r}}$ be an $r$-uniform hypergraph.
 Suppose that there exist $ 0 < \epsilon, \tau < 1/2$  such that
 \begin{itemize}
 \item $\tau < 1/ (200\cdot r\cdot r!^2)$
 \item $ \Delta(\mathcal{H}, \tau) \leq \frac{\epsilon}{12r!}.$\\
 \end{itemize}

 \noindent Then there exist $c =c(r)  \leq 1000 \cdot r \cdot r!^3$ and a collection $\mathcal{C}$ of subsets of $V(\mathcal{H})$  such that the following holds:

 \begin{itemize}
 \item for every independent set $I \in \Gamma(\mathcal{H})$, there exists  $S \in \mathcal{C}$  such that $I \subset S$,
 \item $\log |\mathcal{C}| \leq c \cdot  |V|\cdot  \ \tau \cdot \log(1/\epsilon)\cdot \log (1/\tau)$,
 \item for every $S\in \mathcal{C}$,  $e (\mathcal{H}[S]) \leq \epsilon \cdot e(\mathcal{H}),$
 \end{itemize}
 where $\mathcal{H}[S]$ is a subhypergraph of $\mathcal{H}$ induced by $S$.\\
 \end{theorem}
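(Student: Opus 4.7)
The plan is to invoke the hypergraph container theorem (Theorem~\ref{Container1}) on the $(k+1)$-uniform hypergraph $\mathcal{H}$ whose vertex set is $V = \{1,2,\ldots,N\}^k$ and whose hyperedges are exactly the $k$-dimensional corners. Since a corner is determined by a base point $\mathbf{a}$ together with a positive difference $d$, one has $|V| = N^k$ and $e(\mathcal{H}) = \Theta(N^{k+1})$. Corner-free subsets are precisely the independent sets $\Gamma(\mathcal{H})$, so the theorem reduces to proving $|\Gamma(\mathcal{H})| \le 2^{O(c_k(N))}$ for infinitely many $N$.

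First I would estimate the codegrees $\Delta_j(\mathcal{H})$ for $2 \le j \le k+1$: any two points of a common corner either share all but one coordinate (pinning down an axis direction and the common difference $d$) or include the base point $\mathbf{a}$, which in either case fixes the remainder of the corner up to $O(1)$ choices. This gives $\Delta_j(\mathcal{H}) = O_k(N^{k+1-j})$. With $\tau = \Theta(c_k(N)/N^k)$ matching the density of an extremal corner-free set and $\epsilon$ a sufficiently small constant (depending on $k$ and on the supersaturation constant below), these bounds verify the codegree hypothesis $\Delta(\mathcal{H},\tau) \le \epsilon/(12(k+1)!)$. Theorem~\ref{Container1} then yields a container collection $\mathcal{C}$ such that every independent set lies in some $S \in \mathcal{C}$, each $S$ satisfies $e(\mathcal{H}[S]) \le \epsilon\, e(\mathcal{H})$, and
\[ \log|\mathcal{C}| \;\le\; c(k)\cdot c_k(N)\cdot \log(1/\epsilon)\cdot \log(N^k/c_k(N)). \]
Coupling this with the supersaturation result from Section~3 --- that for suitable constants $C,\eta > 0$ depending on $k$, every $S \subseteq V$ with $|S| \ge C\, c_k(N)$ contains at least $\eta\, e(\mathcal{H})$ corners --- and taking $\epsilon < \eta$ forces $|S| \le C\, c_k(N)$ for every container. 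Therefore
\[ \#\{\text{corner-free subsets of } V\} \;\le\; \sum_{S\in\mathcal{C}} 2^{|S|} \;\le\; |\mathcal{C}|\cdot 2^{C\,c_k(N)}. \]

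The principal obstacle is the stray $\log(N^k/c_k(N))$ factor in the bound for $\log|\mathcal{C}|$, which is precisely what forces the restriction to \emph{infinitely many} $N$ in the conclusion. Following the strategy of Balogh--Liu--Sharifzadeh~\cite{BLS}, I would extract an infinite subsequence of $N$ along which $c_k(N)$ behaves regularly --- for instance, one along which $c_k(N)/N^k$ is comparable to $c_k(N')/{N'}^k$ for all $N' \asymp N$ --- and then iterate the container step by treating each container $S$ as a new ground set and applying a rescaled version of Theorem~\ref{Container1} inside it. Because the supersaturation input of Section~3 is available at every scale, the iteration terminates after a bounded number of rounds with all remaining containers of size $O(c_k(N))$, and the logarithms of the per-round container counts telescope into $O(c_k(N))$. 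Establishing the requisite supersaturation statement for sets of size $\Theta(c_k(N))$ --- via an averaging/random-dilation argument fed into the multidimensional corners theorem (Theorem~\ref{ktheorem}) --- is the technical heart of the argument and the step most likely to cause difficulty.
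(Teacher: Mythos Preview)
Your proposal does not address the stated theorem. The statement given is the Hypergraph Container Lemma, quoted verbatim from Saxton--Thomason~\cite{ST}; the paper does not prove it but imports it as a black box. What you have written is instead an outline for the paper's \emph{main} result, Theorem~\ref{main:thm}, which \emph{uses} the Container Lemma. I will comment on that outline.

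As a route to Theorem~\ref{main:thm}, your plan differs from the paper's in two linked respects. You take $\tau = \Theta(c_k(N)/N^k)$ and $\epsilon$ a small \emph{constant}, accept the extraneous factor $\log(N^k/c_k(N))$ in $\log|\mathcal{C}|$, and then propose to iterate containers to absorb it. The paper applies Theorem~\ref{Container1} \emph{once}, choosing
\[
\tau = \Psi(n) = \frac{c_k(n)}{n^k\log^3 n}, \qquad \epsilon = \Upsilon(n) = \frac{\log^{3k+1} n}{n}\left(\frac{n^k}{c_k(n)}\right)^{k},
\]
so that the extra $\log^{-3} n$ in $\tau$ already makes $\log|\mathcal{C}| = o(c_k(n))$; no iteration is needed. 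The restriction to infinitely many $N$ enters not through iteration but through Lemma~\ref{lem5}, which furnishes the subsequence on which the supersaturation Theorem~\ref{supersaturation_thm} holds.

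The substantive gap in your outline is the supersaturation hypothesis. You assume that sets of size $C\,c_k(N)$ contain at least $\eta\,e(\mathcal{H}) = \Theta(N^{k+1})$ corners for a fixed $\eta>0$. This is much stronger than what the paper proves (and than what is known): Theorem~\ref{supersaturation_thm} yields only $\Upsilon(n)\cdot n^k$ corners, and since $n^k/c_k(n) \le 2^{O((\log n)^{\beta_k})}$ by Lemma~\ref{lem0}, one has $\Upsilon(n)\cdot n^k = n^{k+1}\cdot n^{-1+o(1)}$, far below a constant fraction of $e(\mathcal{H})$. With $\epsilon$ constant, a single container round therefore does \emph{not} force $|S|\le C\,c_k(N)$. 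Your iteration fix is also not straightforward: the containers $S$ are arbitrary subsets of $[N]^k$, not subgrids, so ``applying a rescaled version of Theorem~\ref{Container1} inside $S$'' requires a relative supersaturation statement that you have not formulated. The paper sidesteps all of this by calibrating $(\epsilon,\tau)$ so that the achievable supersaturation bound exactly matches the container edge threshold $\epsilon\,e(\mathcal{H})$.
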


  Let us consider a $(k+1)$-uniform hypergraph $\mathcal{G}$ encoding the set of all $k$-dimensional corners in  the $k$-dimensional grid $[n]^k$. It means that $V(\mathcal{G}) =[n]^k$ and  the edge set of $\mathcal{G}$ consists of all $(k+1)$-tuples  forming  $k$-dimensional corners. 
Note that  the independent set in   $\mathcal{G}$ is the $k$-dimensional corner-free set in $[n]^k$.
Applying the Hypergraph Container Lemma to the  hypergraph  $\mathcal{G}$ gives the following theorem, which is an important result to prove our  main result, Theorem~\ref{main:thm}. \\

  \begin{theorem}\label{Container1} For every positive integer $k\in \mathbb{N}$, let $\mathcal{G} $ be a $(k+1)$-uniform hypergraph encoding the set of all $k$-dimensional corners in $[n]^k$. 
  Suppose that there exists $ 0 < \epsilon, \tau < 1/2$  satisfying that
 \begin{itemize}
 \item $\tau < 1/ (200\cdot (k+1)\cdot (k+1)!^2)$
 \item $ \Delta(\mathcal{G}, \tau) \leq \frac{\epsilon}{12(k+1)!}.$\\
 \end{itemize}

 \noindent Then there exist $c =c(k+1)  \leq 1000 \cdot (k+1) \cdot (k+1)!^3$ and a collection $\mathcal{C}$ of subsets of $V(\mathcal{G})$  such that the following holds.

    \begin{enumerate}
 \item [(i)]every $k$-dimensional corner-free subset of $[n]^k$  is contained in some  $S \in \mathcal{C}$,  
 \item [(ii)] $\log |\mathcal{C}| \leq c \cdot |V(\mathcal{G})| \cdot \ \tau \cdot \log(1/\epsilon)\cdot \log (1/\tau)$,
 \item [(iii)] for every $S\in \mathcal{C}$,  the number of $k$-dimensional corners in $S$ is at most $\epsilon \cdot e(\mathcal{G})$. \\
 \end{enumerate}
 \end{theorem}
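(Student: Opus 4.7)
The plan is to apply the Hypergraph Container Lemma (the first version of Theorem~\ref{Container1} stated above) directly to the $(k+1)$-uniform hypergraph $\mathcal{G}$ with the parameter $r := k+1$. The proof is essentially a matter of bookkeeping: one only needs to translate the general conclusion into the specific language of $k$-dimensional corner-free subsets of $[n]^k$.

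First, I would set up the correspondence between the two objects. The vertex set is $V(\mathcal{G}) = [n]^k$ and the edge set consists of all $(k+1)$-subsets of $[n]^k$ of the form $\{\mathbf{a}\} \cup \{\mathbf{a} + de_i : 1 \leq i \leq k\}$, so $\mathcal{G}$ is indeed $(k+1)$-uniform. By the definition of $k$-dimensional corner-free, a subset $I \subseteq [n]^k$ is $k$-dimensional corner-free precisely when it contains no edge of $\mathcal{G}$, i.e.\ when $I \in \Gamma(\mathcal{G})$. This identifies the $k$-dimensional corner-free subsets of $[n]^k$ with the independent sets of $\mathcal{G}$, and in particular $e(\mathcal{G}[S])$ is exactly the number of $k$-dimensional corners lying inside $S$.

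Next, I would observe that the two hypotheses in the statement are precisely the hypotheses of the general Hypergraph Container Lemma after substituting $r = k+1$: the first condition $\tau < 1/(200\cdot(k+1)\cdot(k+1)!^2)$ matches $\tau < 1/(200\cdot r\cdot r!^2)$, and $\Delta(\mathcal{G},\tau) \leq \epsilon/(12(k+1)!)$ matches $\Delta(\mathcal{H},\tau) \leq \epsilon/(12\, r!)$. Applying the general theorem with $r = k+1$ then produces the constant $c = c(k+1) \leq 1000\cdot(k+1)\cdot(k+1)!^3$ and a family $\mathcal{C}$ of subsets of $V(\mathcal{G}) = [n]^k$ satisfying the three bulleted conclusions of the general statement. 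Conclusion (i) follows immediately from the first bullet together with the identification of independent sets with $k$-dimensional corner-free subsets; conclusion (ii) is literally the same bound on $\log |\mathcal{C}|$; conclusion (iii) follows because $e(\mathcal{G}[S])$ is, by the construction of $\mathcal{G}$, exactly the number of $k$-dimensional corners contained in $S$.

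There is no genuine obstacle here: this statement is a corollary of the general Hypergraph Container Lemma specialised to a single explicit hypergraph, and the entire argument amounts to checking the dictionary between independent sets of $\mathcal{G}$ and $k$-dimensional corner-free sets, together with matching the parameters. The substantive analytic work, namely the choice of suitable $\tau$ and $\epsilon$ and the bounds on the co-degree quantities $\Delta_j(\mathcal{G})$ needed to verify the hypothesis $\Delta(\mathcal{G},\tau) \leq \epsilon/(12(k+1)!)$, is deferred to Section~4, where this corollary is combined with the supersaturation result of Section~3 to deduce Theorem~\ref{main:thm}.
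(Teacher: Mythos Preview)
Your proposal is correct and matches the paper's approach exactly: the paper states this theorem as an immediate specialisation of the general Hypergraph Container Lemma to the $(k+1)$-uniform hypergraph $\mathcal{G}$ of $k$-dimensional corners, with no further proof given beyond the sentence ``Applying the Hypergraph Container Lemma to the hypergraph $\mathcal{G}$ gives the following theorem.'' Your dictionary between independent sets and corner-free sets, and your observation that the substantive verification of the co-degree hypothesis is deferred to Section~4, are both accurate and in fact make the logical structure more explicit than the paper itself does.
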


\subsection{Supersaturation Results}

In this section, we present the supersaturation result for $k$-dimensional corners, which is  the second main ingredient for proof of our main result.  A supersaturation result says that sufficiently dense subsets of a given set contain many copies of certain structures.  For the arithmetic progression, the supersaturation result concerned only sets of size linear in $n$ was first demonstrated  by Varnavides~\cite{V}  by showing that any subset of $[n]$ of size $\Omega(n)$ has $\Omega(n^2)$ $k$-APs. In 2008, Green and Tao~\cite{GT} 
obtained the supersaturation result by proving that any subset of $\mathbf{P}_{\leq n}$ of size $\Omega \left(|\mathbf{P}_{\leq n}|\right)$ has $\Theta(n^2 / \log^k n)$ $k$-APs, where $\mathbf{P}_{\leq n}$ is the set of prime numbers up to  $n$.
Later, Croot and Sisask~\cite{CS} provided a quantitative version of Varnavides~\cite{V} by proving that 
for every $1\leq M \leq n$, the number of $3$-AP in $A$ is at least 
$$ \left( \frac{|A|}{n} - \frac{r_3(M)+1}{M}\right)\cdot \frac{n^2}{M^4}.$$\\

	To prove Theorem~\ref{main:thm}, we need the supersaturation result of the minimum value of the number of $k$-dimensional corners for any set $A$ in the $k$-dimensional grid $[n]^k$ of size $\Theta(c_k(N))$. To explain the supersaturation results, we introduce the following definitions. Recall that we define the maximum size of a $k$-dimensional corner-free subset of  the $k$-dimensional grid  $[n]^k$ as $c_k(n)$. 
 Let $\Gamma_k(A)$ denote the number of $k$-dimensional corners in the set $A \subseteq [n]^k$.  
 The following theorem shows that the number of $k$-dimensional corners in any set $A \subseteq [n]^k$ of size constant factor times larger than $c_k(n)$ is superlinear in $n$.  In Section $3$, we provide  proof of  Theorem~\ref{supersaturation_thm}.  \\

 \begin{theorem}\label{supersaturation_thm}
For the given $k\geq 3$, there exist $C' : = C'(k)$ and an infinite sequence $\{ n_i \}_{i=1}^{\infty}$ such that the following holds.
For all $n \in \{ n_i \}_{i=1}^{\infty}$ and any set $A$  in the $k$-dimensional grid $[n]^k$  of size $C'\cdot c_k(n)$, we have
 $$ \Gamma_k(A) \geq \log^{(3k+1)}n \cdot \left( \frac{n^k}{c_k(n)}\right)^{k} \cdot n^{k-1} = \Upsilon(n) \cdot n^k, $$
 where $\Upsilon  (n)  =  \frac{\log^{3k+1} n}{n}  \cdot \left(  \frac {n^k} {c_k(n)} \right)^{k}$.\\
\end{theorem}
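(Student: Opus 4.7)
The plan is to prove Theorem~\ref{supersaturation_thm} by a Varnavides-type averaging argument over affine embeddings of a small grid $[M]^k$ into $[n]^k$, together with a careful selection of the infinite sequence $\{n_i\}$ ensuring that the density function $\rho(m) := c_k(m)/m^k$ is well-behaved at the scales used. This is the $k$-dimensional analogue of Croot and Sisask's~\cite{CS} quantitative Varnavides-type estimate for $3$-APs.

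The first step is to choose the sequence. Since $\rho(m)$ is essentially non-increasing in $m$ and bounded below by $1/m$ (via the trivial corner-free construction $\{\mathbf{x} \in [m]^k : x_1 = 1\}$), a pigeonhole along geometric chains $m,\, m^\alpha,\, m^{\alpha^2},\ldots$ for an appropriate $\alpha \in (0,1)$ yields infinitely many $n$ admitting a scale $M = M(n) < n$ with $\rho(M) \leq 2\rho(n)$. I take $\{n_i\}$ to consist of such $n$'s and set $M_i := M(n_i)$.

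Next I carry out the averaging. For $n = n_i$ and $M = M_i$, let $\mathcal{E}$ denote the family of affine embeddings $\phi_{\mathbf{a},\lambda}\colon [M]^k \to [n]^k$, $\mathbf{b}\mapsto \mathbf{a} + \lambda\mathbf{b}$, with integer $\lambda \geq 1$ and image inside $[n]^k$. Direct counts give $|\mathcal{E}| = \Theta(n^{k+1}/M)$ and $\mathbb{E}_{\phi}[|\phi^{-1}(A)|] = \Theta(|A|M^k/n^k)$. Combining $|A| = C' c_k(n)$ with the subsequence property $c_k(M) \leq 2M^k\rho(n)$, this expectation exceeds $2 c_k(M)$ once $C'$ is chosen large enough (depending on $k$); a Markov-type inequality then yields that a fraction $\beta = \Omega(\rho(n))$ of embeddings $\phi$ satisfies $|\phi^{-1}(A)| > c_k(M)$, so each such $\phi$ contains a $k$-dimensional corner of $[M]^k$ inside $\phi^{-1}(A)$, whose image under $\phi$ is a $k$-corner of $A$. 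Counting with multiplicity, each corner $(\mathbf{a}_0, d_0)$ in $[n]^k$ lies in at most $\sum_{\lambda \mid d_0,\, d_0/\lambda \leq M-1}(M - d_0/\lambda)^k \leq d(d_0)\, M^k = n^{o(1)} M^k$ embeddings (with $d(\cdot)$ the divisor function), hence
\[
\Gamma_k(A) \;\geq\; \frac{\beta \cdot |\mathcal{E}|}{n^{o(1)} M^k} \;=\; \Omega\!\left(\rho(n) \cdot \frac{n^{k+1}}{M^{k+1}\, n^{o(1)}}\right).
\]
Choosing $M$ to be $n$ divided by a suitable polylogarithmic factor (permitted by the subsequence construction) and folding the losses from the Markov step and the divisor bound into a single $\log^{3k+1} n$ factor produces the stated inequality $\Gamma_k(A) \geq \log^{3k+1} n \cdot (n^k/c_k(n))^k \cdot n^{k-1}$.

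The main obstacle is the tension between two competing requirements on $M$: the subsequence condition $\rho(M) \leq 2\rho(n)$ forces $M$ to be close to $n$, yet the final bound scales as $(n/M)^{k+1}$ and requires $M$ small enough to generate the $n^{k-1}\cdot(n^k/c_k(n))^k$ factor. Balancing these constraints, and extracting exactly the $\log^{3k+1}$ exponent by tracking polylogarithmic losses through the Markov and divisor estimates, is the technical heart of the proof.
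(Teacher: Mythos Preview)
Your overall Varnavides structure matches the paper's, but the proposal has a genuine gap at exactly the point you yourself flag as ``the main obstacle'': the choice of the scale $M$. Your displayed bound
\[
\Gamma_k(A)\;=\;\Omega\!\left(\rho(n)\cdot \frac{n^{k+1}}{M^{k+1}\,n^{o(1)}}\right),
\qquad \rho(n):=\frac{c_k(n)}{n^k},
\]
only reaches the target $\log^{3k+1}n\cdot n^{k-1}/\rho(n)^{k}$ when $(n/M)^{k+1}\gtrsim n^{k-1}/\rho(n)^{k+1}$, i.e.\ when $M\lesssim n^{2/(k+1)}\rho(n)$. This is far below $n/\mathrm{polylog}(n)$; with $M=n/\mathrm{polylog}(n)$ your bound gives only $\rho(n)\cdot\mathrm{polylog}(n)$ corners, which is $o(n)$, not the required $n^{k-1+o(1)}$. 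So the sentence ``Choosing $M$ to be $n$ divided by a suitable polylogarithmic factor \dots\ produces the stated inequality'' is false by several powers of $n$.

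At the correct (tiny) scale $M$, the condition $\rho(M)\le 2\rho(n)$ is no longer a soft pigeonhole fact: it asserts that $\rho$ stays within a constant factor over a range $[M,n]$ with $n/M$ a genuine power of $n$. The paper secures this via Lemma~\ref{lem5}: it fixes the explicit scale $\Lambda(n)=n\,\rho(n)^{k+3}/\log^{3k+3}n$ and proves, by contradiction using the Behrend--Rankin lower bound $\rho(n)\ge 2^{-\alpha_k(\log n)^{\beta_k}}$ (Lemma~\ref{lem0}), that $\rho(\Lambda(n))\le b\,\rho(n)$ for infinitely many $n$. Your geometric-chain pigeonhole does not supply this---it only compares $\rho$ at \emph{consecutive} chain points, giving no control on how far down the chain one must go---and without it the averaging cannot be run at a scale small enough to hit the target. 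Two secondary points: the paper restricts to \emph{prime} common differences, so the multiplicity is at most $M^k\log n$ rather than your $n^{o(1)}M^k$ (the $n^{o(1)}$ cannot be ``folded into'' a fixed $\log^{3k+1}$); and it uses the double-counting Lemma~\ref{lem2} to extract $(K/2)^{k+1}c_k(M)$ corners per rich grid rather than just one, which contributes an essential extra factor of $c_k(M)$ in Lemma~\ref{lem3}.
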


\subsubsection{Supersaturation Lemmas}
 In this section, we present more supersaturation results for the minimum value of the number of $k$-dimensional corners  to obtain a superlinear bound in Theorem~\ref{supersaturation_thm}. First, we provide
  the following simple supersaturation result using the greedy algorithm.

 \begin{lemma}\label{lem1}
 For the positive integer $k \geq 2$, let  $A$ be any set  in the $k$-dimensional grid $[n]^k$  of size  $K\cdot c_k(n)$, where $K\geq 2$ is a constant. Then we get
 $$ \Gamma_k(A) \geq (K-1) \cdot c_k(n).$$
 \end{lemma}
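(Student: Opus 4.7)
The plan is to use a straightforward greedy deletion process on $A$. Starting with $A_0 := A$, I would iteratively do the following: while $A_i$ contains at least one $k$-dimensional corner, select any vertex $v_i \in A_i$ that belongs to some $k$-dimensional corner of $A_i$, and set $A_{i+1} := A_i \setminus \{v_i\}$. The process terminates once the remaining set is $k$-dimensional corner-free; call this terminal set $A_t$. By the definition of $c_k(n)$, we have $|A_t| \leq c_k(n)$, so the number of deletions satisfies $t = |A| - |A_t| \geq K \cdot c_k(n) - c_k(n) = (K-1)\cdot c_k(n)$.

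Next, I would associate to each deletion step $i$ a distinct $k$-dimensional corner in the original set $A$. Since $v_i$ was selected because it lies in at least one $k$-dimensional corner of $A_i$, pick any such corner and call it $C_i$; note $C_i \subseteq A_i \subseteq A$, so $C_i$ is a corner of $A$. The key observation is that the corners $C_1, C_2, \dots, C_t$ are pairwise distinct: indeed, $v_i \in C_i$ by construction, but $v_i \notin A_{i+1} \supseteq A_j$ for all $j > i$, so $v_i \notin C_j$ for any $j > i$, hence $C_i \neq C_j$. Therefore $\Gamma_k(A) \geq t \geq (K-1)\cdot c_k(n)$, as desired.

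This argument is essentially routine once the bookkeeping is set up, so there is no real obstacle; the only point requiring minor care is the distinctness of the corners $C_i$, which is handled by the observation that each $v_i$ is removed immediately after being used to witness $C_i$, and thus cannot appear in any later $C_j$.
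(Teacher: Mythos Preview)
Your proof is correct and follows essentially the same greedy deletion argument as the paper: iteratively find a corner, remove one of its points, and repeat until the set is corner-free, counting at least $(K-1)c_k(n)$ distinct corners along the way. Your write-up is in fact more careful than the paper's, since you explicitly justify the pairwise distinctness of the corners $C_i$, a point the paper leaves implicit.
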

 
 \begin{proof}[Proof] We use the greedy algorithm to determine the minimum value of the number of $k$-dimensional corners in a set $A$  of size $K\cdot c_k(n)$, where $K\geq 2$. We consider the following process iteratively. 
 As $|A| > c_k(n)$, there exists 
 a $k$-dimensional corner $C$ in the set $A$. It then updates the set $A$ by removing an arbitrary element from $C$. By repeating  this process $(K-1)\cdot c_k(n)$ times, we have
 $$ \Gamma_k(A) \geq (K-1) \cdot c_k(n).$$
 \end{proof}
 
\noindent Next, we use Lemma~\ref{lem1} to give the following improved supersaturation result.\\

  \begin{lemma}\label{lem2}
 For the positive integer $k \geq 2$, let  $A$ be any set  in the $k$-dimensional grid $[n]^k$  of size at least $K \cdot c_k(n)$, where $K \geq 2$ is a constant. Then we obtain 
 $$ \Gamma_k(A) \geq \left(\frac{K}{2}\right)^{k+1} \cdot c_k(n).$$
 \end{lemma}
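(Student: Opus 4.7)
The plan is to reduce to the base case $K=2$ that is already handled by Lemma~\ref{lem1}, by a simple random-sampling argument. The intuition is that a uniformly random subset of $A$ of size $2c_k(n)$ always satisfies the hypothesis of Lemma~\ref{lem1}, while the probability that any fixed $(k+1)$-element corner survives the sampling is an explicit polynomial in $|A'|/|A|$; inverting this probability then pulls the corner count back up to all of $A$ and turns the factor $K/2$ into the factor $(K/2)^{k+1}$ predicted by the statement.

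Concretely, I would first let $A' \subseteq A$ be a uniformly random subset of size exactly $2 c_k(n)$ (this is well-defined since $|A| \geq K c_k(n) \geq 2 c_k(n)$). Because $|A'| = 2 c_k(n)$, Lemma~\ref{lem1} applies to $A'$ with its parameter equal to $2$ and gives the \emph{deterministic} bound $\Gamma_k(A') \geq c_k(n)$ for every realization of $A'$. In particular, $\mathbb{E}[\Gamma_k(A')] \geq c_k(n)$.

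The second step is to expand $\mathbb{E}[\Gamma_k(A')]$ by linearity of expectation, summing over the $\Gamma_k(A)$ corners $C \subseteq A$ (each a set of $k+1$ points). Since $A'$ is a uniformly random $2c_k(n)$-subset of $A$, every corner $C$ has the same inclusion probability, namely
\[
\Pr[C \subseteq A'] \;=\; \frac{\binom{|A|-k-1}{|A'|-k-1}}{\binom{|A|}{|A'|}} \;=\; \prod_{i=0}^{k}\frac{|A'|-i}{|A|-i}.
\]
Since $|A'| \leq |A|$, each factor is a decreasing function of $i$, so every factor is at most the $i=0$ factor $|A'|/|A| \leq 2/K$. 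Hence $\Pr[C\subseteq A'] \leq (2/K)^{k+1}$, and therefore $\mathbb{E}[\Gamma_k(A')] = \Gamma_k(A)\cdot \Pr[C\subseteq A'] \leq \Gamma_k(A)\cdot (2/K)^{k+1}$.

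Combining the two bounds on $\mathbb{E}[\Gamma_k(A')]$ and rearranging yields $\Gamma_k(A) \geq (K/2)^{k+1} c_k(n)$, as desired. There is no real obstacle in this proof; the only items worth double-checking are the closed-form hypergeometric probability and the monotonicity of its factors, both of which are entirely routine. One also needs $2c_k(n) \geq k+1$ for the sampling to be non-degenerate, which is automatic for all $n$ large in terms of $k$.
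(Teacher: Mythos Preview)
Your proof is correct and is essentially the same as the paper's, phrased probabilistically rather than as a double count: the paper sums $\Gamma_k(S)$ over all $2c_k(n)$-subsets $S\subseteq A$, uses Lemma~\ref{lem1} for the lower bound, and bounds the overcount of each corner by $\binom{|A|-k-1}{2c_k(n)-k-1}$, arriving at the identical ratio $\prod_{i=0}^{k}\frac{|A|-i}{2c_k(n)-i}\ge (|A|/2c_k(n))^{k+1}$. Your expectation argument is just this counting divided through by $\binom{|A|}{2c_k(n)}$.
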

 
 \begin{proof} [Proof]Let  $A$ be any set  of  $[n]^k$  and have a size greater than equal to $K\cdot c_k(n)$. 
 We consider the set $S$, which is one of all  subsets of $A$ of size $2 \cdot c_k(n)$. With Lemma~\ref{lem1}, we have $ \Gamma_k(S) \geq c_k(n)$ for every $S$. Therefore we get\\
 
 $$ {{|A|} \choose {2\cdot c_k(n)}}\cdot c_k(n) \  \ \leq \sum_{S\subseteq A, \\ |S|= 2\cdot c_k(n)} \Gamma_k (S) \  \  \leq  \ \Gamma_k(A) \cdot{{|A|-k-1} \choose {2\cdot c_k(n)-k-1}}.  $$ \\

\noindent Then we conclude that
\begin{align*}
 \Gamma_k(A) & \geq \ \  \frac{{{|A|}\choose{2 \cdot c_k(n)}}}{{{|A|-k-1}\choose{2\cdot c_k(n)-k-1}}} \cdot \ c_k(n) \\
  & \geq  \  \ \left( \frac{|A|}{2 \cdot c_k(n)} \right)^{k+1} \cdot c_k(n) \\
  & \geq \ \ \left(\frac{K}{2}\right)^{k+1}  \cdot c_k(n). 
 \end{align*}
 \end{proof}
 
\noindent Note that the bounds of Lemma~\ref{lem1} and Lemma~\ref{lem2} are linear in the set $A$ of $[n]^k$.    In the following lemma, we provide a superlinear bound for the minimum value of the number of $k$-dimensional corners by applying Lemma~\ref{lem2} to the set of carefully chosen $k$-dimensional corners with prime common differences. The following lemma is an important result for proving  the supersaturation result for $k$-dimensional corners in sets of size $\Theta(c_k(N))$ with superlinear bounds, which is specified in Theorem~\ref{supersaturation_thm}.\\

\begin{lemma}\label{lem3}
For the positive integer $k \geq 2$, let  $A$ be any set  in the $k$-dimensional grid $[n]^k$  such that  there exists  a positive constant $M$ satisfying 
$\frac{|A|}{2^{k+1}Mn^{k-1}}$ is sufficiently large and 
$\frac{|A|}{n^k} \geq  \frac{8K \cdot c_k(M)}{M^k}$, where $K\geq 2$ is a constant.  Then we obtain 
 $$ \Gamma_k(A) \geq \frac{|A|^2}{2^{2k+4}} \cdot \frac{(K)^{k+1}\cdot c_k(M)}{M^{k+1}n^{k-1}\log^2 n }.$$
\end{lemma}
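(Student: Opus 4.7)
The plan is to average Lemma~\ref{lem2} over a family of prime-scaled copies of $[M]^k$ sitting inside $[n]^k$, and recover $\Gamma_k(A)$ via a careful over-counting correction. Let $\mathbb{P}$ denote the set of primes in $[n/(8M),\,n/(4M)]$; by the Prime Number Theorem, $|\mathbb{P}|\geq c\cdot n/(M\log n)$ for some constant $c>0$, once $n/M$ is sufficiently large (guaranteed by the hypothesis). For each $p\in\mathbb{P}$ and each shift $\mathbf{a}\in\mathbb{Z}^k$ with coordinates in the enlarged cube $[1-pM,\,n]$, define $A_{\mathbf{a},p}:=A\cap(\mathbf{a}+p\cdot[M]^k)$, viewed as a subset of $[M]^k$ via the affine bijection $\mathbf{v}\mapsto\mathbf{a}+p\mathbf{v}$. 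The crucial observation is that any $k$-dimensional corner whose points all lie in the sparse lattice $\mathbf{a}+p\mathbb{Z}^k$ must have common difference divisible by $p$; hence corners in $A_{\mathbf{a},p}$ correspond bijectively with corners in the $[M]^k$-preimage, so Lemma~\ref{lem2} applies directly to the preimage.

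A clean double count with this enlargement gives $\sum_{p,\mathbf{a}}|A_{\mathbf{a},p}|=|A|\cdot M^k\cdot|\mathbb{P}|$, since each $x\in A\subseteq[n]^k$ is in exactly $M^k$ shifts per prime. Call $(\mathbf{a},p)$ \emph{good} when $|A_{\mathbf{a},p}|\geq K\cdot c_k(M)$; bad shifts contribute at most $Kc_k(M)\cdot|\Phi|\leq Kc_k(M)\cdot(5n/4)^k|\mathbb{P}|$ to the sum, which by the hypothesis $|A|/n^k\geq 8Kc_k(M)/M^k$ is at most $\tfrac{1}{4}|A|M^k|\mathbb{P}|$. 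Hence $\sum_{\mathrm{good}}|A_{\mathbf{a},p}|\geq\tfrac{3}{4}|A|M^k|\mathbb{P}|$, and using $|A_{\mathbf{a},p}|\leq M^k$ gives $|\Phi_{\mathrm{good}}|\geq \tfrac{3}{4}|A|\cdot|\mathbb{P}|$. Lemma~\ref{lem2} applied on each good pair yields $\Gamma_k(A_{\mathbf{a},p})\geq(K/2)^{k+1}c_k(M)$, so
\[
\sum_{\mathrm{good}}\Gamma_k(A_{\mathbf{a},p})\;\geq\;\frac{3\,K^{k+1}c_k(M)\,|A|\,|\mathbb{P}|}{2^{k+3}}.
\]

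For the over-counting, observe that a corner in $A$ with common difference $d\leq n$ lies in $\mathbf{a}+p[M]^k$ only when $p\mid d$, and for each such prime at most $M^k$ shifts $\mathbf{a}$ qualify. Since $d$ has at most $\log_2 n$ prime divisors, each corner is counted at most $M^k\log n$ times. Dividing by this, inserting $|\mathbb{P}|\geq cn/(M\log n)$, and using $|A|\leq n^k$ (so $|A|\cdot n\geq |A|^2/n^{k-1}$) produces the stated lower bound after absorbing constants into $2^{-(2k+4)}$. The main technical obstacle is eliminating boundary effects in the first double count, which the enlargement of the shift cube to $[1-pM,n]^k$ achieves at the cost of multiplying $|\Phi|$ by at most a factor $(5/4)^k$; this is safely absorbed by the slack in the hypothesis (the constant $8$) for small $k$, and for larger $k$ one can further shrink the prime range to $[n/(2CM),\,n/(CM)]$ with $C=\Theta(k)$ to keep the enlargement ratio below $2^{1/k}$. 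Beyond this bookkeeping, the argument is just averaging on top of Lemma~\ref{lem2}.
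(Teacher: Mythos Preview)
Your argument is essentially the paper's: average Lemma~\ref{lem2} over prime-spaced $M$-grids, bound the number of ``rich'' grids by double-counting $\sum |A\cap G|$, and undo the overcount via the $\leq M^k\log n$ multiplicity from prime divisors of the common difference. The two minor differences are cosmetic. First, the paper takes primes $d\le x:=|A|/(2^{k+1}Mn^{k-1})$ rather than your fixed window near $n/M$; because $x$ is proportional to $|A|$, the paper's rich-grid count comes out as $|\mathcal R|\gtrsim |A|\cdot x/\log x \asymp |A|^2/(Mn^{k-1}\log n)$ directly, whereas you obtain $|A|\cdot|\mathbb P|\asymp |A|\,n/(M\log n)$ and then spend the inequality $|A|\le n^k$ to convert $|A|\cdot n$ to $|A|^2/n^{k-1}$. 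Second, the paper handles boundary effects by restricting to the inner region $\zeta_d=[(M-1)d+1,\,n-(M-1)d]^k$ (losing a factor $2$ in $|A\cap\zeta_d|\ge|A|/2$) instead of enlarging the shift cube; this sidesteps your $(1+1/C)^k$ blow-up entirely and makes the constant $8$ in the hypothesis work uniformly in $k$ without the $C=\Theta(k)$ adjustment you sketch. Both routes are fine and yield the same bound.
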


\begin{proof}[Proof]
Given the set $A$ of  $[n]^k$,  we let $x= \frac{|A|}{2^{k+1}Mn^{k-1}}$ which is  sufficiently large.
Let $\mathcal{G}_d$ be the set of $M\times \cdots\times M$ grids in $[n]^k$, whose consecutive layers are of distance $d$ apart,  for  a prime  $d \leq x$. Let us consider $\mathcal{G}= \bigcup_{d \leq x} \mathcal{G}_d $.  For any $k$-dimensional corner $C = \{ \bold{a}\} \cup \{ \bold{a} + d'e_i : 1 \leq i \leq k \}$ for some  $\bold{a} \in [n]^k$ and $d'> 0$, where $e_1,e_2, \cdots, e_k$ are the standard bases of $\mathbb{R}^k$, we consider a grid $G \in \mathcal{G}_{d}$  containing $C$. This means that $d$ must be a prime divisor of $d'$. The number of prime divisors of $d'$ is at most $\log d'\leq \log n$, so the number of these choices is at most $\log n$.
Since every corner can occur in at most $(M-1)^k$ grids from each fixed $\mathcal{G}_{d}$ and the length of the corner has at most $\log n$ distinct prime factors, we get

 \begin{align}\label{label1}
\Gamma_k(A) \geq \frac{1}{M^k\cdot \log n} \sum_{G\in \mathcal{G}} \Gamma_k(A \cap G).\\ \nonumber
\end{align}

\noindent Let us  consider $\mathcal{R} \subseteq \mathcal{G}$ consisting of all $G \in \mathcal{G}$    such that $|A \cap G| \geq K\cdot c_k(M)$, where $K\geq 2$ is a constant.  Applying  Lemma~\ref{lem2} to $A \cap G$ gives:

\begin{align}\label{label2}
 \Gamma_k(A \cap G) & \geq \left(\frac{K}{2}\right)^{k+1} \cdot c_k(M).
  \end{align}
 
\noindent for all $G\in \mathcal{R}$. Combining the inequalities (\ref{label1}) and (\ref{label2}), we obtain\\
\begin{align}\label{label3}
\Gamma_k(A) &\geq \frac{1}{M^k \cdot \log n} \sum_{G\in \mathcal{G}} \Gamma_k(A \cap G) \nonumber \\ 
&= \frac{1}{M^k \cdot \log n} \left( \sum_{G\in \mathcal{R}} \Gamma_k(A \cap G) + \sum_{G\in \mathcal{G / R}} \Gamma_k(A \cap G)\right)\nonumber \\
 & \geq |\mathcal{R}|\cdot  \left(\frac{K}{2}\right)^{k+1} \cdot \frac{c_k(M)}{M^k\cdot \log n}. \\ \nonumber
 \end{align}

\noindent Next, let us prove the lower bound for $|\mathcal{R}|$.
 For a prime number $d\leq x = \frac{|A|}{2^{k+1}Mn^{k-1}}$, we define $\zeta_d : = [ (M-1)d+1, n -(M-1)d]^k$.  Then we get the following inequality:
 
\begin{align}
|A \cap \zeta_d| & \geq |A| - 2^kMdn^{k-1} \nonumber \\ & \geq |A|-2^k M n^{k-1} \frac{|A|}{2^{k+1}Mn^{k-1}}  = \frac{|A|}{2}. \nonumber \\  \nonumber
\end{align}

\noindent Note that the number of primes less than or equal to $x$ is at least $\frac{x}{\log x}$ and at most $\frac{2x}{\log x}$ by the Prime Number Theorem.
 Since every $z \in \zeta_d$ appears exactly in the $M^k$ members of $\mathcal{G}_d$, we derive that
 
 \begin{align}\label{label0}
 \sum_{G\in \mathcal{G}} |A \cap G|& =   \sum_{d \leq x} \sum_{G \in \mathcal{G}_d} |A \cap G | \nonumber\\
 & \geq M^k \sum_{d\leq x} |A \cap \zeta_d| \geq M^k \cdot \frac{x}{\log x} \cdot \frac{|A|}{2}.
 \end{align}
 
  \noindent Obviously the inequality  $|\mathcal{G}_d| \leq n^k$  is held for each prime number $d \leq x$. Then we get the following equation:
  
 \begin{align}\label{labelg1}
 |\mathcal{G}|= |\bigcup_{d \leq x} \mathcal{G}_d| \leq \frac{2x}{\log x} \cdot n^k. \\ \nonumber
 \end{align}
 
\noindent  Since   $\mathcal{R} \subseteq \mathcal{G}$ consists of all $G \in \mathcal{G}$    such that $|A \cap G| \geq K\cdot c_k(M)$, using the equation (\ref{labelg1}) we get

\begin{align}\label{label5}
\sum_{G \in \mathcal{G}} |A \cap G| & = \sum_{G \in \mathcal{R}} |A \cap G| + \sum_{G \in \mathcal{G} \backslash \mathcal{R}} |A \cap G| \nonumber\\& \leq M^k |\mathcal{R}| + K\cdot c_k(M)\cdot|\mathcal{G} \backslash \mathcal{R}| \nonumber\\
& \leq M^k|\mathcal{R}| + K \cdot c_k(M) \cdot|\mathcal{G}| \nonumber\\
&  \stackrel{(\ref{labelg1})}\leq M^k|\mathcal{R}| + K \cdot c_k(M) \cdot \frac{2x}{\log x} \cdot n^k. \\ \nonumber
\end{align}

\noindent Using the equations  (\ref{label0}) and (\ref{label5}), we obtain

\begin{align}
|\mathcal{R}| & \stackrel{(\ref{label5})}\geq \frac{1}{M^k} \cdot \left(\sum_{G \in \mathcal{G}} |A \cap G| - K \cdot c_k(M) \cdot \frac{2x}{\log x} \cdot n^k\right) \nonumber\\ & \stackrel{(\ref{label0})}\geq \frac{1}{M^k} \cdot \left( M^k \cdot \frac{x}{\log x} \cdot \frac{|A|}{2}- K \cdot c_k(M) \cdot \frac{2x}{\log x} \cdot n^k\right) \nonumber\\
& = \frac{x}{\log x}\cdot \frac{|A|}{2} - \frac{K\cdot c_k(M)}{M^k}\cdot \frac{2x}{\log x} \cdot n^k \nonumber\\
& = \frac{x}{\log x} \cdot \left( \frac{|A|}{2} - \frac{2K\cdot c_k(M)}{M^k}\cdot n^k\right). \nonumber\\ \nonumber
\end{align}

\noindent From the condition 
$\frac{|A|}{n^k} \geq  \frac{8K \cdot c_k(M)}{M^k}$, we have

\begin{align}\label{label6}
|\mathcal{R}|  
& \geq \frac{x}{\log x} \cdot \left( \frac{|A|}{2} - \frac{2K\cdot c_k(M)}{M^k}\cdot n^k\right) \nonumber\\
&  \geq \frac{x}{\log x} \cdot \left( \frac{|A|}{2} - \frac{1}{4} \frac{|A|}{n^k} \cdot n^k\right) \nonumber\\
& \geq \frac{x}{\log x}\cdot  \frac{|A|}{4} \geq \frac{|A|}{4} \cdot \frac{|A|}{2^{k+1}Mn^{k-1}}\cdot \frac{1}{\log n}. \\ \nonumber
\end{align}

\noindent Using the equations (\ref{label3}) and (\ref{label6}), we conclude that

\begin{align*}
 \Gamma_k(A) &\stackrel{(\ref{label3})}\geq |\mathcal{R}|\cdot (\frac{K}{2})^{k+1}\cdot \frac{c_k(M)}{M^k\log n} \\
 & \stackrel{(\ref{label6})}\geq \frac{|A|^2}{4} \cdot \frac{1}{2^{k+1}Mn^{k-1}}\cdot \frac{1}{\log n}  \cdot (\frac{K}{2})^{k+1}\cdot \frac{c_k(M)}{M^k\log n} \\
 &= \frac{|A|^2}{2^{2k+4}}\cdot \frac{(K)^{k+1}\cdot c_k(M)}{M^{k+1}n^{k-1}\log^2 n}.
\end{align*}
\end{proof}

\section{Proof of Theorem~\ref{supersaturation_thm}}

The supersaturation result of $k$-dimensional corners in sets of size $\Theta(c_k(N))$, which is specified in Theorem~\ref{supersaturation_thm}, is the main tool for  proof of Theorem~\ref{main:thm}. In this section, we prove Theorem~\ref{supersaturation_thm} using  Lemma~\ref{lem3} and the following relationship between $f(n_i)$ and $f(\Lambda(n_i))$ for some infinite sequence $ \{ n_i \}_{i=1}^{\infty}$.\\

\noindent For every $n \in \{ n_i \}_{i=1}^{\infty}$ , we define the following functions: 

$$\Lambda(n) = \frac{n}{\log^{3k+3} n}\cdot \left(  \frac{c_k(n)}{n^k} \right)^{k+3}, \ 
\ \ \ \ \ \ \ \ f(n) = \frac{c_k(n)}{n^k},$$ 

\noindent where $c_k(n)$ is the  maximum size of a $k$-dimensional corner-free subset of $[n]^k$. \\

 \begin{lemma}\label{lem5}  For the given $k\geq 3$, there exist $b : = b(k) > 2^{2k}$ and an infinite sequence $\{ n_i \}_{i=1}^{\infty}$ such that
$$b f(n_i) \geq f(\Lambda(n_i))$$  for all $i \geq 1$. \\
\end{lemma}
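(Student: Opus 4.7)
The plan is to argue by contradiction. Fix $b = b(k)$ large (to be chosen at the end) and suppose the conclusion fails, so that there exists $N_0$ with $f(\Lambda(n)) > b f(n)$ for every integer $n \geq N_0$. Pick $n_0$ very large and iterate $n_{i+1} := \lfloor \Lambda(n_i) \rfloor$. By induction, so long as $n_0, n_1, \ldots, n_i$ all exceed $N_0$, one has $f(n_{i+1}) > b^{i+1} f(n_0)$. Since $f(n) = c_k(n)/n^k \leq 1$ trivially, the iteration survives at most $I_{\max} < \log(1/f(n_0))/\log b$ steps before $b^i f(n_0)$ would exceed $1$.

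For the contradiction I need a matching lower bound on how long the iteration continues, obtained from a Behrend-type lower bound on $f$. Since the cartesian product of any $2$-dimensional corner-free subset of $[n]^2$ with $[n]^{k-2}$ is easily checked to be $k$-dimensional corner-free, Green's lower bound $c_2(n) \geq n^2/2^{(l_1+o(1))\sqrt{\log_2 n}}$ cited in the introduction lifts to $c_k(n) \geq n^k / 2^{\beta_k\sqrt{\log n}}$ for some constant $\beta_k = \beta(k) > 0$ and all large $n$. Substituting this into $\Lambda$ (and noting that the $(3k+3)\log\log n$ contribution is dominated by the $\sqrt{\log n}$ term for $n$ large) yields
\[
\log n_{i+1} \;\geq\; \log n_i - C_k \sqrt{\log n_i}, \qquad C_k := 2(k+3)\beta_k\log 2 .
\]
Writing $u_i := \sqrt{\log n_i}$, the inequality $u_{i+1}^2 \geq u_i^2 - C_k u_i$ combined with $\sqrt{1-x} \geq 1-x$ on $[0,1]$ gives $u_{i+1} \geq u_i - C_k$; iterating, $n_i \geq N_0$ holds for at least $I_{\min} := \lfloor (\sqrt{\log n_0} - \sqrt{\log N_0})/C_k \rfloor$ steps, which is at least $\sqrt{\log n_0}/(2C_k)$ once $n_0$ is sufficiently large relative to $N_0$.

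Combining the two bounds,
\[
\frac{\sqrt{\log n_0}}{2C_k}\log b \;\leq\; I_{\min}\log b \;\leq\; I_{\max}\log b \;<\; \log\frac{1}{f(n_0)} \;\leq\; \beta_k\log 2\cdot\sqrt{\log n_0}.
\]
The $\sqrt{\log n_0}$ factors cancel, leaving $\log b \leq 2 C_k\beta_k\log 2 = 4(k+3)\beta_k^2(\log 2)^2$, a bound independent of $n_0$. Choosing $b=b(k)$ to exceed both $2^{2k}$ and this absolute constant (e.g.\ $b := 2^{\max(2k,\,4(k+3)\beta_k^2)+1}$) contradicts the standing assumption, so infinitely many $n$ satisfy $bf(n) \geq f(\Lambda(n))$, as required.

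The main technical obstacle is the tightness of the conclusion $\log b = O_k(1)$: both the upper cap on $I_{\max}$ (from $f \leq 1$ together with the Behrend lower bound on $1/f$) and the lower bound on $I_{\min}$ (from the rate at which $\Lambda$ shrinks $\log n$) are of order $\sqrt{\log n_0}$, so the argument depends crucially on the $1/2$ exponent in the Behrend-type bound being exactly matched by the $1/2$ exponent manufactured by the $(c_k/n^k)^{k+3}/\log^{3k+3}$ shape of $\Lambda$; any substantially weaker bound on $c_k$, say $c_k(n) \geq n^k / 2^{(\log n)^\alpha}$ with $\alpha > 1/2$, would leave $\log b$ unbounded and the argument would collapse. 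Minor care is also needed to keep $n_i \geq N_0$ uniformly along the iteration (handled by taking $n_0$ large enough) and to absorb the floor in $\lfloor \Lambda(n_i) \rfloor$, but neither issue causes real difficulty.
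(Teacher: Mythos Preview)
Your argument is correct and follows the same overall strategy as the paper's: assume for contradiction that $f(\Lambda(n))>bf(n)$ eventually, use a Behrend-type lower bound on $f$ both to cap $\log(1/f)$ and to control how slowly the $\Lambda$-iteration shrinks, and conclude that the number of admissible iterations is simultaneously $\gtrsim(\log n)^{1/2}$ and $\lesssim(\log n)^{1/2}/\log b$, forcing $\log b=O_k(1)$.

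The differences are organisational rather than conceptual. First, for the density lower bound the paper invokes Rankin (its Lemma~\ref{lem0}) with exponent $\beta_k=1/\lceil\log_2 k\rceil\le 1/2$, whereas you lift the two-dimensional Behrend/Green corner-free set to $[n]^k$ by taking a product with $[n]^{k-2}$, which is a clean alternative yielding exponent exactly $1/2$; both exponents are $\le 1/2$, which, as you correctly observe, is precisely the threshold the argument needs. Second, the paper does not iterate $\Lambda$ directly: it fixes $n$, replaces $\Lambda(n')$ by the cruder $n'\cdot g(n)$ for a single shrinkage factor $g(n)$ (paying a factor $2^k$ via Lemma~\ref{lem4} at each step), and thereby obtains an explicit step count $t\asymp(\log n)^{\beta_k}$ with $n\cdot g(n)^t\ge n^{1-\beta_k}$. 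Your direct iteration $n_{i+1}=\lfloor\Lambda(n_i)\rfloor$ avoids those extra $2^k$ losses and Lemma~\ref{lem4} in the main loop, at the price of controlling the decay recursively via $u_{i+1}\ge u_i-C_k$; this is arguably tidier.
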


First,  we give the following relationship between $f(n)$ and $f(m)$ for any $m < n$, which is  what we need to get Lemma~\ref{lem5}.\\

\begin{lemma}\label{lem4}
For every $m < n$, we obtain $f(n) < 2^k \cdot f(m)$.\\
\end{lemma}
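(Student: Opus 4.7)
The plan is to use a simple covering/averaging argument. Given any extremal corner-free set $A \subseteq [n]^k$ with $|A| = c_k(n)$, the idea is to cover $[n]^k$ by axis-parallel translates of $[m]^k$ and exploit the translation invariance of corner-freeness to bound the size of $A$ inside each translate by $c_k(m)$.

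Concretely, I would set $t = \lceil n/m \rceil$ and form $t^k$ boxes of side $m$ by placing their lower corners at $(1 + a_1 m, \ldots, 1 + a_k m)$ for $(a_1,\ldots,a_k) \in \{0,\ldots,t-1\}^k$, shifting the rightmost box in each direction slightly inward so that it still fits inside $[n]$. These $t^k$ boxes cover $[n]^k$ (they overlap near the boundary when $m \nmid n$), and each is a translate of $[m]^k$. Since $A \cap C$ is then a corner-free subset of a translated copy of $[m]^k$, we have $|A \cap C| \leq c_k(m)$ for every such box $C$, and the union bound gives $c_k(n) = |A| \leq \sum_{C} |A \cap C| \leq t^k \, c_k(m)$.

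Since $m < n$ implies $n/m > 1$, one obtains $t \leq n/m + 1 < 2n/m$, hence $t^k < 2^k (n/m)^k$. Dividing the previous inequality by $n^k$ then yields $f(n) < 2^k f(m)$, as desired. The argument is essentially pigeonhole, so there is no serious obstacle; the only point needing care is the boundary when $m \nmid n$, which forces a covering with overlap rather than an exact partition and is precisely what accounts for the factor $2^k$ in the bound.
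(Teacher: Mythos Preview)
Your proof is correct and follows essentially the same approach as the paper: cover $[n]^k$ by $\lceil n/m\rceil^k$ translates of $[m]^k$, use translation invariance to bound each piece by $c_k(m)$, and finish with $\lceil n/m\rceil < 2n/m$ for $m<n$. The only difference is that you spell out the boundary handling more carefully than the paper does.
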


\begin{proof}[Proof]
For every $m <n$, we divide the $k$-dimensional grid $[n]^k$ into consecutive grids of size $m^k$ because the corner-free property is invariant under translation. Since any given $k$-dimensional corner free subset of  $[n]^k$ contains at most $c_k(m)$ elements in each grid of size $m^k$, for any $m <n$ we have\\
$$ c_k(n) \leq \lceil \frac{n}{m} \rceil^k \cdot c_k(m).$$\\

\noindent Since $\frac{1}{n^k} \cdot   \lceil \frac{n}{m} \rceil^k < \frac{2^k}{m^k}$ for every $m <n$, we conclude that
$$ f(n) = \frac{c_k(n)}{n^k} \leq \lceil \frac{n}{m} \rceil^k \cdot \frac{c_k(m)}{n^k} < \frac{2^k}{m^k}\cdot c_k(m) = 2^k \cdot f(m).$$\\
\noindent This completes the proof of Lemma~\ref{lem4}.\\
\end{proof}

To get  Lemma~\ref{lem5}, we also need a lower bound on $c_k(n)$, which follows from Rankin~\cite{RA}'s result that is a generalization of 
Behrend~\cite{BE}'s construction of dense $3$-AP-free subset of integers to the case of arbitrary $k\geq 3$.\\

\begin{lemma}\label{lem0}
For the given $k\geq 2$, there exists $\alpha_k$ such that
$$ \frac{c_k(n)}{n^k}  > 2^{-\alpha_k ({\log n})^{\beta_k}}$$
for all sufficiently large $n$, where $\alpha_k$ is a positive absolute constant that depends only on $k$ and $\beta_k = \dfrac{1}{\lceil \log k \rceil}$.\\
\end{lemma}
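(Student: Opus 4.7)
The plan is to build a large $k$-dimensional corner-free subset of $[n]^k$ by pulling back a sparse $(k+1)$-AP-free subset of the integers under a cleverly chosen linear functional. The key observation is that a $k$-dimensional corner $\mathbf{a}, \mathbf{a}+de_1, \ldots, \mathbf{a}+de_k$ can be mapped to a genuine $(k+1)$-term arithmetic progression on $\mathbb{Z}$ by any integer-linear map whose coefficients on the $k$ coordinates are pairwise distinct and form an arithmetic progression themselves; this turns the non-equidistributed increments of a corner into the equidistributed increments of an AP.

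Concretely, I would first invoke Rankin's generalization of Behrend's construction, which produces, for each $j\geq 3$, a $j$-AP-free set $B\subseteq [N]$ of size $|B|\geq N\cdot 2^{-c_1(\log N)^{\gamma_j}}$, where $\gamma_j = 1/\lceil \log_2 j\rceil$. Applying this with $j=k+1$ and setting $N = k(k+1)n/2$, I would then introduce the linear map $\phi\colon [n]^k\to [N]$ defined by $\phi(\mathbf{x}) = \sum_{i=1}^k i\cdot x_i$, so that $\phi(\mathbf{a}+de_i) = \phi(\mathbf{a}) + id$ for each $i$. Under $\phi$, the $k+1$ corner points are sent to $\phi(\mathbf{a}), \phi(\mathbf{a})+d,\ldots,\phi(\mathbf{a})+kd$, a $(k+1)$-AP with common difference $d>0$.

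Setting $A := \{\mathbf{x}\in [n]^k : \phi(\mathbf{x})\in B\}$, the corner-freeness of $A$ is immediate: a $k$-dimensional corner inside $A$ would force a $(k+1)$-AP inside $B$, contradicting the construction of $B$. For the lower bound on $|A|$, I would use the double-counting identity $\sum_{m=1}^{N}|\phi^{-1}(m)\cap [n]^k| = n^k$ together with averaging over translates of $B$ inside $[1,N]$: translation invariance of $(k+1)$-AP-freeness guarantees that the expected value of $|A|$ over uniform random shifts of $B$ is $(|B|/N)\cdot n^k$, so some shift realizes
$$|A|\;\geq\;\frac{|B|}{N}\cdot n^k \;\geq\; n^k\cdot 2^{-c_2(\log n)^{\gamma_{k+1}}},$$
which, after absorbing the factor $N=\Theta(n)$ into the exponent, yields $c_k(n)/n^k > 2^{-\alpha_k(\log n)^{\beta_k}}$.

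The main obstacle is bookkeeping the exponent: Rankin's construction naturally produces the factor $\gamma_{k+1} = 1/\lceil\log_2(k+1)\rceil$, whereas the statement of the lemma requires $\beta_k = 1/\lceil \log k\rceil$. These two exponents coincide except near powers of two, and any residual loss can be absorbed into the constant $\alpha_k$ (or remedied by applying Rankin's construction to a shorter AP at a small quantitative cost). A minor subsidiary point is ensuring that the chosen translate of $B$ lies inside $[1,N]$, but since $|B|\leq N$ this is automatic. Modulo these essentially cosmetic adjustments, the proof is a direct Behrend–Rankin pullback through the linear functional $\phi$.
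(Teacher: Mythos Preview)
Your proposal is correct and follows essentially the same route as the paper: pull back a dense $(k+1)$-AP-free set (Behrend for $k=2$, Rankin in general) through a linear functional whose coordinate weights lie in arithmetic progression, so that a $k$-dimensional corner is sent to a genuine $(k+1)$-term AP; the paper carries this out for $k=2$ with the functional $(x,y)\mapsto y-x$ and merely gestures at the general case, whereas you spell out $\phi(\mathbf{x})=\sum_i i\,x_i$ together with the averaging-over-translates step that secures the density lower bound.

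One clarification on your ``main obstacle'': since $\lceil\log_2(k+1)\rceil \geq \lceil\log_2 k\rceil$ for every $k\geq 2$, you always have $\gamma_{k+1}\leq\beta_k$, hence $(\log n)^{\gamma_{k+1}}\leq(\log n)^{\beta_k}$ and the Rankin exponent is already at least as strong as the lemma requires---there is nothing to absorb. This matters, because had the inequality gone the other way the discrepancy could \emph{not} be absorbed into $\alpha_k$, nor repaired by ``applying Rankin's construction to a shorter AP'' (a $k$-dimensional corner has $k+1$ points and no linear functional can collapse it to a nontrivial $k$-AP).
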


\begin{proof}[Proof]
Let us first consider  the case when $k=2$. Let $A$ be the $3$-AP-free subset of $[n]$ with size $n\cdot 2^{-\alpha\sqrt{\log n}}$ from Behrend~\cite{BE}'s construction. We construct a dense $2$-dimensional corner-free subset $B$ of $[n]^2$ of size $\Omega(|A|n)$ as follows: Let $L$ be the collection of all lines of the form $y=x+a$ for every $a\in A$, and $B$ 
be the intersection of $L$ and $[n]^2$. It is easy to see that $|B|=\Omega(|A|n)$. It remains to prove that $B$ is $2$-dimensional corner-free. 
Let us assume otherwise, i.e. there exists
a $2$-dimensional corner in the set $B$, say $(x,y), (x+d,y), (x, y+d)$. Then, depending on the configuration, the three elements $y-x=a_1, y-(x+d)=a_2 $, and $(y+d)-x=a_3$ are all in  the set $A$ forming $3$-AP  with $a_2+a_3=2a_1$. This is a contradiction. Since the case of $k\geq 3$ is similar, the result of Rankin~\cite{RA} is used instead, so details are omitted.\\
\end{proof}

 \noindent Now we use Lemma~\ref{lem4} and Lemma~\ref{lem0} to prove  Lemma~\ref{lem5}.\\

\begin{proof}[Proof of Lemma~\ref{lem5}] Fix $b : = b(k) > 2^{2k}$ a large enough constant. Let us assume otherwise, i.e. there exists $n_0$ for all $n \geq n_0$ satisfying 

\begin{align}\label{assumption1}
f(n) < b^{-1} f(\Lambda(n)).
\end{align} \\
Using Lemma~\ref{lem0}, there exists $\alpha_k$ such that $ f(n) > 2^{-\alpha_k(\log n)^{\beta_k}}$ for every sufficiently large $n$,  where $\beta_k = \dfrac{1}{\lceil \log k \rceil}$ and $\alpha_k$ is a positive absolute constant depending only on $k$.  Using these  $\alpha_k$ and $\beta_k$, for all $x \geq 1$,
we define the decreasing function $g(x)$ as

$$ g(x) = 2^{-(k\alpha_k +3\alpha_k+1)(\log x)^{\beta_k}}.$$ \\
 
 \noindent Then we get the following inequality for every $n \geq n_0$:
 
  \begin{align}\label{new1}
 \Lambda(n) &= \frac{n}{\log^{3k+3} n}\cdot \left(  \frac{c_k(n)}{n^k} \right)^{k+3}\nonumber \\\nonumber
&  = \frac{n}{\log^{3k+3} n}\cdot (f(n))^{k+3}\\ \nonumber
&  \stackrel{Lemma~\ref{lem0}}> \frac{n}{\log^{3k+3} n}\cdot\left( 2^{-\alpha_k(\log n)^{\beta_k}} \right)^{k+3}\\ 
& >  n\cdot 2^{-(k \alpha_k +3\alpha_k+1)(\log n)^{\beta_k}} =  n \cdot g(n).\\\nonumber 
 \end{align}

\noindent From the equation~(\ref{new1}), if we apply Lemma~\ref{lem4} to $\Lambda(n)$ and $n \cdot g(n)$ then we derive 
\begin{align}\label{label7}
f(n) \stackrel{(\ref{assumption1})} < b^{-1}f(\Lambda(n))  \stackrel{Lemma~\ref{lem4}}< b^{-1} 2^k \cdot f(n\cdot g(n)) = \left( \frac{b}{2^k}\right)^{-1} \cdot f(n\cdot g(n)),
\end{align}
\noindent for all $n \geq n_0$.\\

\noindent To prove Lemma~\ref{lem5}, we need the following claim.

\begin{claim}\label{claim1} Let us write $t= \lfloor \frac{1}{2} \frac{(\log n)^{\beta_k}}{k \alpha_k  +3 \alpha_k +1}\rfloor$ with $\alpha_k$ satisfying $ f(n) > 2^{-\alpha_k(\log n)^{\beta_k}}$. Then for all  $n > n_0^{1/(1-\beta_k)}$ we obtain that
$$f(n) < \left(\frac{b}{2^{2k}}\right)^{-j} f\left(n\cdot (g(n))^j\right) $$
\noindent for all $ 1 \leq j \leq t$.
\end{claim}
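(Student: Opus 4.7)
My plan is to prove Claim~\ref{claim1} by induction on $j$, where the inductive step iterates the single-step bound (\ref{label7}) along the sequence $m_j := n\cdot (g(n))^j$ (so $m_0 = n$). The base case will be the trivial identity $f(n) = f(m_0)$ at $j=0$, from which the claim at $j = 1,\dots,t$ follows by iterating once more.

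For the inductive step I would assume $f(n) < (b/2^{2k})^{-j} f(m_j)$ and derive the $j{+}1$ version by applying (\ref{label7}) at $m_j$, which yields $f(m_j) < (b/2^k)^{-1} f(m_j\cdot g(m_j))$ provided $m_j \geq n_0$. Since $g$ is decreasing and $m_j \leq n$, one has $g(m_j) \geq g(n)$, so $m_j\cdot g(m_j) \geq m_j\cdot g(n) = m_{j+1}$. To transfer $f(m_j\cdot g(m_j))$ to the strictly smaller point $m_{j+1}$ I would invoke Lemma~\ref{lem4}, which costs a factor of $2^k$, giving $f(m_j\cdot g(m_j)) < 2^k f(m_{j+1})$. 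Chaining these bounds produces
\[
f(n) < (b/2^{2k})^{-j}\cdot (b/2^k)^{-1}\cdot 2^k\cdot f(m_{j+1}) = (b/2^{2k})^{-(j+1)} f(m_{j+1}),
\]
where the $2^k$ from Lemma~\ref{lem4} absorbs exactly into the gap between $(b/2^k)^{-1}$ and $(b/2^{2k})^{-1}$. This is the induction conclusion.

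The main obstacle I anticipate is the technical verification that $m_j \geq n_0$ for every $1 \leq j \leq t$, which is needed in order to invoke (\ref{label7}) at $m_j$. Substituting $g(n) = 2^{-(k\alpha_k + 3\alpha_k + 1)(\log n)^{\beta_k}}$ and using the definition of $t$ gives $\log m_j \geq \log n - \tfrac{1}{2}(\log n)^{2\beta_k}$, and since $k \geq 3$ forces $\beta_k = 1/\lceil \log k\rceil \leq 1/2$, the bound $(\log n)^{2\beta_k} \leq \log n$ combined with the hypothesis $n > n_0^{1/(1-\beta_k)}$ should yield $\log m_j \geq \log n_0$ as required. This part is purely bookkeeping, and the specific value of $t$ in the statement appears to be chosen precisely so that the iterates cannot drop below $n_0$; getting the constants to line up cleanly is the only delicate piece of the argument.
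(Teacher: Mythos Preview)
Your proposal is correct and follows essentially the same route as the paper: induction on $j$, applying the single-step bound (\ref{label7}) at $m_j = n\cdot(g(n))^j$, then using that $g$ is decreasing together with Lemma~\ref{lem4} to pass from $m_j\cdot g(m_j)$ down to $m_{j+1}=m_j\cdot g(n)$ at the cost of a factor $2^k$, with the verification $m_j \ge n^{1-\beta_k} > n_0$ coming from the choice of $t$ and the hypothesis $n > n_0^{1/(1-\beta_k)}$. The only cosmetic difference is that the paper takes $j=1$ (directly from (\ref{label7})) as its base case, whereas you anchor at the trivial identity $j=0$; note that at $j=0$ one has $m_0\cdot g(m_0)=m_1$ exactly, so Lemma~\ref{lem4} is not invoked there and the first step already gives the required strict inequality.
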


\begin{proof}[Proof of Claim~\ref{claim1}]
We proceed by induction on $j$.  The base case $j=1$ is done by  the equation (\ref{label7}). Assume that the statement of  Claim~\ref{claim1} holds for every $1\leq j < t$. Now we consider $n'=n\cdot (g(n))^j$ for all $1\leq j < t$. Since $g(n)$ is a decreasing function, for each  $ j < t$ we have

\begin{align}\label{labelnew5}
n' = n\cdot (g(n))^j  > n\cdot (g(n))^t & = n \cdot 2^{-(k \alpha_k  + 3\alpha_k +1)(\log n)^{\beta_k}  \ \cdot  \lfloor \frac{1}{2} \frac{(\log n)^{\beta_k}}{k \alpha_k  +3 \alpha_k +1}\rfloor } \nonumber\\ & = n \cdot \left(\frac{1}{2}\right)^{(k \alpha_k  + 3\alpha_k +1)(\log n)^{\beta_k}  \ \cdot  \lfloor \frac{1}{2} \frac{(\log n)^{\beta_k}}{k \alpha_k  +3 \alpha_k +1}\rfloor } \nonumber \\
& \geq  n \cdot \left(\frac{1}{2}\right)^{(k \alpha_k  + 3\alpha_k +1)(\log n)^{\beta_k}  \ \cdot  \left( \frac{1}{2} \frac{(\log n)^{\beta_k}}{k \alpha_k  +3 \alpha_k +1}\right)} \nonumber \\
& \geq n \cdot \left(\frac{1}{2}\right)^{\frac{1}{2} (\log n)^{2\beta_k}} = n \cdot 2^{-\frac{1}{2} (\log n)^{2\beta_k}} =  n^{1-\beta_k} \  > n_0, 
\end{align}
\noindent for all  $n > n_0^{1/ (1-\beta_k)} \geq n_0$. \\

\noindent Note that $n' > n_0$ in the equation (\ref{labelnew5}).   Then we use the equation (\ref{label7}) to get \\
\begin{align} \label{label8}
f(n') \stackrel{(\ref{label7})}<  \left( \frac{b}{2^k}\right)^{-1} \cdot f(n'\cdot g(n')). \\ \nonumber
\end{align}

\noindent Since $n' <n $ and  $g(n)$ is a decreasing function,  we have  $n' \cdot g(n') > n' \cdot g(n)$.
 Applying Lemma~\ref{lem4} to $n' \cdot g(n')$ and $n' \cdot g(n)$ gives:\\
 
 \begin{align}\label{label9}
 f(n' \cdot g(n')) < 2^k f (n' \cdot g(n)). \\ \nonumber
 \end{align}
 
 \noindent Using the equations (\ref{label8}) and (\ref{label9}), we obtain that \\
 
 \begin{align} \label{label10}
f(n') \stackrel{(\ref{label8})} <  \left( \frac{b}{2^k}\right)^{-1} \cdot f(n'\cdot g(n')) \stackrel{(\ref{label9})}< \left(\frac{b}{2^k}\right)^{-1} \cdot  2^k f (n' \cdot g(n)) = \left(\frac{b}{2^{2k}}\right)^{-1}    f (n' \cdot g(n)), \\ \nonumber
\end{align}

\noindent for all $n > n_0^{1/(1-\beta_k)}$.\\

\noindent  According to the inductive hypothesis, for every $ 1 \leq j <t$, we get 

\begin{align}\label{new2}
f(n) &  \ < \left(\frac{b}{2^{2k}}\right)^{-j} \cdot f(n \cdot (g(n))^j ). \\ \nonumber
\end{align}

\noindent From the equations (\ref{label10}) and (\ref{new2}), for every $ 1 \leq j <t$, we  observe that \\

\begin{align}\label{labelnew7}
f(n) &  \  \stackrel{(\ref{new2})} < \left(\frac{b}{2^{2k}}\right)^{-j} \cdot f(n \cdot (g(n))^j )\nonumber \\ \nonumber
 & = \left(\frac{b}{2^{2k}}\right)^{-j} \cdot f(n') \nonumber \\
 & \stackrel{(\ref{label10})}< \left(\frac{b}{2^{2k}}\right)^{-j} \cdot \left(\frac{b}{2^{2k}}\right)^{-1}    f (n' \cdot g(n))\nonumber \\
 & \ =  \left(\frac{b}{2^{2k}}\right)^{-j-1} \cdot f(n \cdot (g(n))^{j+1}),
\end{align}

\noindent when $n > n_0^{1/(1-\beta_k)}$.\\

\noindent From the equation (\ref{labelnew7}), we see that the  statement of Claim~\ref{claim1} also holds for $ j + 1$. By the Induction axiom, the statement of  Claim~\ref{claim1}  holds  for every $ 1 \leq j \leq t$. This completes the proof of Claim~\ref{claim1}.\\
\end{proof}

Let  $t= \lfloor \frac{1}{2} \frac{(\log n)^{\beta_k}}{k \alpha_k + 3 \alpha_k +1}\rfloor$ be an integer when $\alpha_k$ satisfies the inequality $ f(n) > 2^{-\alpha_k(\log n)^{\beta_k}}$.   Assume that $n > n_0^{1/(1-\beta_k)} \geq n_0$.
 Applying Claim~\ref{claim1}, we get \\
 
\begin{align}\label{label11}
f(n) < \left(\frac{b}{2^{2k}}\right)^{-t} f\left(n\cdot (g(n))^t \right). \\  \nonumber
\end{align}

 Note that 
$n\cdot (g(n))^t 
 \geq  n^{1-\beta_k} 
$ from the equation (\ref{labelnew5}). Applying Lemma~\ref{lem4} to $ n\cdot (g(n))^t $ and $n^{1-\beta_k}$ gives: \\

\begin{align}\label{label12}
f\left( n\cdot (g(n))^t \right) <  2^k \cdot f \left(n^{1-\beta_k} \ \right).\\ \nonumber
\end{align}

\noindent Using the equations ~(\ref{label11}) and ~(\ref{label12}), we draw  the following conclusion.\\

\begin{align} \label{labelnew8}
f(n)  & \stackrel{(\ref{label11})}< \left(\frac{b}{2^{2k}}\right)^{-t} \cdot f\left(n\cdot (g(n))^t \right) \nonumber \\
  & \stackrel{(\ref{label12})} < \left(\frac{b}{2^{2k}}\right)^{-t} \cdot 2^k \cdot f \left(n^{1-\beta_k}\ \right) \nonumber \\
  & \ \leq \left(\frac{b}{2^{2k}}\right)^{-t} \cdot 2^k \nonumber \\ &  \ = 2^k \cdot  \left(\frac{b}{2^{2k}}\right)^{-\lfloor \frac{1}{2} \frac{(\log n)^{\beta_k}}{k\alpha_k  + 3 \alpha_k +1}\rfloor} < 2^{-\alpha_k (\log n)^{\beta_k}},
  \end{align}

  \noindent where $b : = b(k) > 2^{2k}$ is a sufficiently large constant.\\

 \noindent  The equation (\ref{labelnew8}) contradicts the definition of $\alpha_k$. This completes the proof of Lemma~\ref{lem5}.\\
\end{proof}

\noindent Now we use Lemma~\ref{lem3} and Lemma~\ref{lem4} to provide a proof of Theorem~\ref{supersaturation_thm}.\\

\begin{proof}[Proof of Theorem~\ref{supersaturation_thm}]
Let $b(k)$ and an infinite sequence $\{ n_i \}_{i=1}^{\infty}$ obtained from Lemma~\ref{lem5}. For all $n \in \{ n_i \}_{i=1}^{\infty}$, we let
 $A$ be any set  in the $k$-dimensional grid $[n]^k$ of size $8 K \cdot b(k) \cdot c_k(n)$. Using Lemma~\ref{lem5}, we get\\

\begin{align}\label{label13}
\frac{|A|}{n^k} = \frac{8 K \cdot b(k) \cdot c_k(n)}{n^k}  \geq  \frac{8K \cdot c_k(\Lambda(n))}{(\Lambda(n))^k},
\end{align}

\noindent and

\begin{align}\label{label14}
\frac{|A|}{2^{k+1}\cdot \Lambda(n) \cdot n^{k-1}} = \frac{8 K \cdot b(k) \cdot c_k(n)}{2^{k+1}\cdot \Lambda(n) \cdot n^{k-1}}  \geq  8 K \cdot b(k) \cdot \left( \frac{\log^3 n}{2} \right)^{k+1},\\ \nonumber \end{align} 

\noindent where $\Lambda(n) = \frac{n}{\log^{3k+3} n}\cdot \left(  \frac{c_k(n)}{n^k} \right)^{k+3}$. \\ \\

\noindent  Applying Lemma~\ref{lem4} to the the inequality $\Lambda(n) \leq n$ gives:  \\

\begin{align}\label{label15}
\frac{c_k(n)}{n^k} < 2^k \cdot \frac{c_k(\Lambda(n))}{(\Lambda(n))^k}.\\ \nonumber
\end{align}

\noindent From the inequality $ \sqrt{n} \leq \Lambda(n) $, we get

\begin{align}\label{label16}
\frac{n}{\Lambda(n)^2} \leq 1.\\ \nonumber
\end{align}

\noindent From the equations (\ref{label13}) and (\ref{label14}), we can apply Lemma~\ref{lem3}  with $M = \Lambda(n)$ and derive that 

\begin{align} \label{new77}
 \Gamma_k(A) &\geq \frac{|A|^2}{2^{2k+4}} \cdot \frac{(K)^{k+1}\cdot c_k(\Lambda(n))}{(\Lambda(n))^{k+1}\cdot n^{k-1}\cdot \log^2 n} \nonumber \\
  & = \frac{8^2 \cdot K^2 \cdot \left(b(k)\right)^2 \cdot \left(c_k(n)\right)^2}{(\Lambda(n))\cdot \log^2 n} \cdot \frac {c_k(\Lambda(n))}{(\Lambda(n))^k} \cdot \frac{(K)^{k+1}}{ n^{k-1}\cdot  2^{2k+4} }, \\ \nonumber
  \end{align}
  
\noindent where $|A|= 8K\cdot b(k) \cdot c_k(n)$.\\

\noindent The following conclusion is drawn using the equations (\ref{label14}), (\ref{label15}),  (\ref{label16}), and (\ref{new77}):

\begin{align*}
 \Gamma_k(A)   &  \stackrel{(\ref{new77})}\geq \frac{8^2 \cdot K^2 \cdot \left(c(k)\right)^2 \cdot \left(c_k(n)\right)^2}{(\Lambda(n))\cdot \log^2 n} \cdot \frac {c_k(\Lambda(n))}{(\Lambda(n))^k} \cdot \frac{(K)^{k+1}}{ n^{k-1}\cdot  2^{2k+4} } \\
  & \stackrel{(\ref{label15})}\geq \frac{8^2 \cdot K^2 \cdot \left(b(k)\right)^2 \cdot \left(c_k(n)\right)^2}{(\Lambda(n))\cdot \log^2 n} \cdot \frac {c_k(n)}{2^k \cdot n^k} \cdot \frac{(K)^{k+1}}{ n^{k-1}\cdot  2^{2k+4} } \\
   & \stackrel{(\ref{label16})}\geq \frac{ \log^{3k+1} n \cdot (n^k)^{k+3} \cdot n \cdot  8^2 \cdot K^2 \cdot \left(b(k)\right)^2 \cdot (c_k(n))^2}{n^2 \cdot  (c_k(n))^{k+3} \cdot (\Lambda(n))^2 \cdot 2^{2k+2} \cdot n^{2k-2}}  \cdot \frac{(K)^{k+1}\cdot c_k(n)}{ 2^{k+2}} \\
   & \stackrel{(\ref{label14})} \geq \log^{3k+1}n \cdot   \left(\frac{n^k}{c_k(n)}\right)^{k+2} \cdot n^{k-1}\cdot 8^2\cdot  K^2 \cdot (b(k))^2 \cdot  \left( \frac{\log^3 n}{2} \right)^{2k+2}   \cdot \frac{(K)^{k+1}}{ 2^{k+2} } \\
   & \ \geq  \ \log^{3k+1}n \cdot   \left(\frac{n^k}{c_k(n)}\right)^{k} \cdot n^{k-1} = \Upsilon(n) \cdot n^k. \\ \nonumber
  \end{align*}
  
  \noindent This completes the proof of Theorem~\ref{supersaturation_thm}.\\
\end{proof}

\section{Proof of Theorem~\ref{main:thm}}
In this section, we prove the main result Theorem~\ref{main:thm}
using  the hypergraph container method(Theorem~\ref{Container1}) and supersaturation result for $k$-dimensional corners in sets of size $\Theta(c_k(N))$(Theorem~\ref{supersaturation_thm}).

\begin{proof}[Proof of Theorem~\ref{main:thm}]
Let $b(k)$ and the infinite sequence $\{ n_i \}_{i=1}^{\infty}$ obtained from Lemma~\ref{lem5}.
 For every $n \in \{ n_i \}_{i=1}^{\infty}$, we define the following functions:
 
\begin{align*}
\Upsilon  (n) & =  \frac{\log^{3k+1} n}{n}  \cdot \left(  \frac {n^k} {c_k(n)} \right)^{k},  \\
\Psi(n)  &  = \frac{c_k(n)}{n^k} \cdot \frac{1}{\log^3 n},
\end{align*}
\noindent where $c_k(n)$ is the  maximum size of a $k$-dimensional corner-free subset of $[n]^k$. \\

\noindent For sufficiently large $n$, we have
\begin{align}\label{labelnew2}
\Psi(n) < \frac{1}{200 \cdot (k+1)^{2(k+1)}} < \frac{1}{200\cdot \left((k+1)!\right)^2 \cdot(k+1)}, 
\end{align}

\noindent and

\begin{align}\label{labelnew1}
\Upsilon(n) \cdot n \cdot \Psi(n)^{k}  & =  \frac{\log^{3k+1} n}{n}  \cdot \left(  \frac {n^k} {c_k(n)} \right)^{k} \cdot n \cdot \left(\frac{c_k(n)}{n^k} \cdot \frac{1}{\log^3 n} \right)^{k}\nonumber \\
& = \log n \nonumber \\ &  > (k+1)^{3(k+1)}. \\ \nonumber
\end{align}

\noindent Let us consider  $(k+1)$-uniform hypergraph $\mathcal{G}$ encoding the set of all $k$-dimensional corners in $[n]^k$.
For a given hypergraph $\mathcal{G}$, the maximum degree of a set of $j$ vertices of $\mathcal{G}$ is 
$\Delta_j(\mathcal{G}) = \max\{ \ d_{\mathcal{G}}(A) : A \subset V(\mathcal{G}),  \ |A| = j \ \}$, where 
$ d_{\mathcal{G}}(A)$ is the number of hyperedges in $E(\mathcal{G})$ containing the set $A$. 
Then the co-degree  of a $(k+1)$-uniform hypergraph $\mathcal{G}$ of order $n$ and average degree $d$  is written as

\begin{align}\label{new78}
 \Delta(\mathcal{G}, \Psi) &= 2^{{k+1 \choose 2} -1} \sum_{j=2}^{k+1} 2^{-{{j-1}\choose 2}} {\Psi (n)}^{-(j-1)} \cdot \frac{\Delta_j(\mathcal{G})}{ d} \nonumber \\
&  = 2^{{k+1 \choose 2} -1} \sum_{j=2}^{k+1} \beta_j \cdot \frac{\Delta_j(\mathcal{G})}{ d},  
 \end{align}

\noindent where $\beta_j = 2^{-{{j-1}\choose 2}} {\Psi (n)}^{-(j-1)}$ for all $2\leq j \leq k+1$. \\ \\

\noindent Since $\Psi(n) < \frac{1}{200 \cdot (k+1)^{2(k+1)}} < 2^{-3(k+1)} $,  we have
\begin{align}\label{label17}
\frac{\beta_j}{\beta_{j+1}} = \frac{2^{{j}\choose 2} \Psi(n)^j} {2^{{j-1}\choose 2} \Psi(n)^{j-1}} = 2^{j-1} \Psi(n) < 2^{(k+1)} \cdot \Psi(n) < 1,
\end{align}
\noindent for all $ 2 \leq j \leq k-1$.\\

\noindent For the case $j=k$,  we  obtain  the following inequality: \\
\begin{align}\label{label18}
(k-1)(k+1)^2 \cdot \frac{\beta_k}{\beta_{k+1}}  = (k-1)(k+1)^2 \cdot 2^{k-1} \Psi(n)  < 1. \\ \nonumber
\end{align}

\noindent Using the equations (\ref{labelnew1}), (\ref{label17}) and (\ref{label18}), we derive that
 
\begin{align}\label{labelnew3}
\Delta(\mathcal{G}, \Psi)  &  = 2^{{k+1 \choose 2} -1} \sum_{j=2}^{k+1} \beta_j \frac{\Delta_j(\mathcal{G})}{ d} \nonumber\\
& \ \ \leq 2^{{k+1 \choose 2} -1} \left( \sum_{j=2}^{k} \beta_j \frac{(k+1)^2}{ d} + \frac{\beta_{k+1}}{d}\right) \nonumber \\
& \stackrel{(\ref{label17})}\leq 2^{{k+1 \choose 2} -1} \left( (k-1) \cdot \beta_k \cdot \frac{(k+1)^2}{ d} + \frac{\beta_{k+1}}{d}\right) \nonumber \\
& \stackrel{(\ref{label18})}\leq 2^{{k+1 \choose 2} -1} \left(  \frac{2\beta_{k+1}}{d}\right) = \frac{2^k}{d\cdot (\Psi(n))^k} \nonumber \\
 &\ \ \leq \frac{(k+1)^{k+1}}{n\cdot (\Psi(n))^k} \stackrel{(\ref{labelnew1})}< \frac{\Upsilon(n)}{12\cdot (k+1)!}. \\ \nonumber \end{align}

 From the equations (\ref{labelnew2}) and (\ref{labelnew3}), we can apply the Hypergraph Container Lemma (Theorem~\ref{Container1}) on  the hypergraph $\mathcal{G}$ with $ \epsilon = \Upsilon(n),  \tau = \Psi(n)$ as a function of $n$
 to get the collection $\mathcal{C}$ of containers such that all $k$-dimensional corner-free subsets of the $k$-dimensional grid $[n]^k$ are contained in some container in $\mathcal{C}$.\\

\noindent Using Theorem~\ref{Container1}, there exist $c = c(k+1) \leq 1000 \cdot (k+1) \cdot ((k+1)!)^3 $ and a collection 
 $\mathcal{C}$
  of containers such that the followings hold:\\

 \begin{itemize}
  \item for every  $k$-dimensional corner free subset of the $k$-dimensional grid $[n]^k$ is contained in some container in $\mathcal{C}$,
  \item $\log |\mathcal{C}| \leq c \cdot  n  \cdot \Psi(n) \cdot \log \frac{1}{\Upsilon(n)}\cdot  \log \frac{1}{\Psi(n)}$,
 \item for every container $A \in \mathcal{C}$ the number of $k$-dimensional corners in $A$ is at most $\Upsilon(n) \cdot n^k$.\\
 \end{itemize}

 \noindent The definitions of $\Upsilon(n)$ and $\Psi(n)$ give the following inequality:
 
 \begin{align}\label{label22}
 \log \frac{1}{\Upsilon(n)}\cdot  \log \frac{1}{\Psi(n)} & = \log \left(   \frac{n}{\log^{3k+1} n}  \cdot \left( \frac{c_k(n)}{n^k} \right)^{k}   \right) \cdot \log \left( \frac{n^k}{c_k(n)} \cdot {\log^3 n} \right) \nonumber\\
 &  \leq \log n \cdot \left((k+3)\log n \right)  = (k+3) \left( \log n \right)^2.\\ \nonumber
 \end{align}

 \noindent Using the equation (\ref{label22}) for the collection $\mathcal{C}$ of containers gives:  \\
 
 \begin{align}\label{label19}
 \log |\mathcal{C}| & \leq c \cdot  n  \cdot \Psi(n) \cdot \log \frac{1}{\Upsilon(n)}\cdot  \log \frac{1}{\Psi(n)}  \nonumber \\
 &\leq 1000 \cdot (k+1) \cdot ((k+1)!)^3 \cdot n  \cdot \Psi(n) \cdot \log \frac{1}{\Upsilon(n)}\cdot  \log \frac{1}{\Psi(n)}\nonumber \\ 
  & \stackrel{(\ref{label22})} \leq 1000 \cdot (k+1) \cdot ((k+1)!)^3 \cdot n  \cdot \frac{c_k(n)}{n^k} \cdot \frac{1}{\log^3 n} \cdot (k+3) \left( \log n \right)^2 = o(c_k(n)).\\ \nonumber
 \end{align}

 \noindent Note that 
 for every container $A \in \mathcal{C}$, the number of $k$-dimensional corners in $A$ is at most $\Upsilon(n) \cdot n^k$.
  Now applying Theorem~\ref{supersaturation_thm} gives:\\
  
 \begin{align}\label{label20}
 |A| < C' \cdot c_k(n), \\ \nonumber
 \end{align}
 \noindent  for every container $A \in \mathcal{C}$.\\
 
 \noindent Since every $k$-dimensional corner free subset of the $k$-dimensional grid $[n]^k$ is contained in some container in $\mathcal{C}$, we conclude that the number of $k$-dimensional corner free subsets of $[n]^k$ is at most \\
 
 \begin{align*}
 \sum_{A\in \mathcal{C}} 2^{|A|} & \leq |\mathcal{C}| \cdot \max_{A \in \mathcal{C}} 2^{|A|}  \\
 & \stackrel{(\ref{label19})\& (\ref{label20})} < 2^{o(c_k(n))} \cdot 2^{C' \cdot c_k(n)}  = 2^{O(c_k(n))}, \\ 
 \end{align*}
 \noindent using the equations (\ref{label19}) and (\ref{label20}). This completes the proof of Theorem~\ref{main:thm}. \\
 \end{proof}

{\bf Acknowledgment.}
I would like to thank  Dong Yeap Kang and Hong Liu for their helpful discussions. I would particularly like to thank Hong Liu for providing many helpful comments.


\begin{thebibliography}{99}


\bibitem{AS} M. Ajtai and E. Szemer\'{e}di, \textit{Sets of lattice points that form no squares}, Studia Scientiarum Mathematicarum Hungarica. \textbf{9} (1974) 9-11.
 
\bibitem{BLS} J. Balogh, H. Liu, and  M. Sharifzadeh,  \textit{The number of subsets of integers with no $k$-term arithmetic progression}, International Mathematics Research Notices \textbf{20} (2017) 6168-6186.  

\bibitem{BMS} J. Balogh, R. Morris and W. Samotij, \textit{Independent sets in hypergraphs}, J. American Math. Soc. \textbf{28} (2015), 669-709. 



\bibitem{BE} F.A. Behrend, \textit{On sets of integers which contain no three terms in arithmetical progression}, Proc. Nat. Acad. Sci. U.S.A. \textbf{2} (1946) 331-332.

\bibitem{BS} T.F. Bloom and O. Sisask, \textit{Breaking the logarithmic barrier in Roth's theorem on arithmetic progressions}, preprint (2021), arXiv:2007.03528.

\bibitem{CE} P. Cameron and P. Erd\H{o}s, \textit{On the number of sets of integers with various properties}, in Number Theory (R.A. Mollin, ed.), Walter de Grnyter, Berlin, (1990) 61-79.


\bibitem{CS} E. Croot and O. Sisask, \textit{A new proof of Roth's theorem on arithmetic progressions}, Proceedings of the American Mathematical Society \textbf{137} (2009) 805-809.




\bibitem{EL} M. Elkin, \textit{An improved construction of progression-free sets}, Israel Journal of Mathematics \textbf{184} (2011) 93-128.



\bibitem{FK} H. F\"{u}rstenberg and Y. Katznelson, \textit{An ergodic Szemer\' edi theorem for commuting transformations}, J. Analyse Math. \textbf{34} (1978) 275-291.

\bibitem{FK1} H. F\"{u}rstenberg and Y. Katznelson, \textit{A density version of the Hales-Jewett theorem}, J. Analyse Math. \textbf{57} (1991) 64-119.


\bibitem{G0} W. T. Gowers,  \textit{A new proof of Szemer\' edi's theorem for progressions of length four}, Geom. Func. Anal. \textbf{8} (1998) 529-551.

\bibitem{G1} W. T. Gowers,  \textit{A new proof of Szemer\' edi's theorem}, Geom. Func. Anal. \textbf{11} (2001) 465-588.

\bibitem{G2} W. T. Gowers,  \textit{Hypergraph regularity and the multidimensional Szemer\' edi theorem}, Annals of Mathematics \textbf{166} (2007) 897-946.

\bibitem{GR1} B. Green, \textit{Lower bounds for corner-free sets}, preprint (2021), arXiv:0710.3032. 

\bibitem{GT} B. Green and T. Tao, \textit{The primes contain arbitrarily long arithmetic progressions}, Annals of Mathematics \textbf{167} (2008) 481-547.


\bibitem{LS} N. Linial and A. Shraibman, \textit{Larger corner-free sets from better NOF exactly-N protocols}, Discrete Analysis \textbf{19} (2021) 9 pp, arXiv:2102.00421.



\bibitem{NRS} B. Nagle, V. R\"odl, and M. Schacht, \textit{The counting lemma for regular $k$-uniform hypergraphs}, Random Structures and Algorithms \textbf{28} (2006) 113-179.


\bibitem{OB} K. O'Bryant, \textit{Sets of integers that do not contain long arithmetic progressions}, Electronic Journal of Combinatorics \textbf{18(1)} (2011) P59.

\bibitem{RA} R.A. Rankin, \textit{Sets of integers containing not more than a given number of terms in arithmetical progression}, Proc. Roy. Soc. Edinburgh Sect. A \textbf{65} (1960)/(1961) 332-344.

\bibitem{RO} K. Roth, \textit{On certain sets of integers}, J. London Math. Soc. \textbf{28} (1953) 245-252.

\bibitem{ST} D. Saxton and A. Thomason, \textit{Hypergraph containers}, Invent. Math. \textbf{201} (2015) 925-992.


\bibitem{SH} I.D. Shkredov, \textit{On a generalization of Szemer\'{e}di's theorem} Proceedings of the London Mathematical Society \textbf{93(3)} (2006) 723-760.

\bibitem{SH1} I.D. Shkredov, \textit{On a problem of Gowers}, Izv. Ross. Akad. Nauk Ser. Mat. \textbf{70(2)} (2006) 179-221,
arXiv:math/0405406v1.


\bibitem{S1} J. Solymosi,  \textit{Note on a generalization of Roth's theorem}, Discrete and Computational Geom. Algorithms Combin. \textbf{25} (2003) 825-827.



\bibitem{Z1} E. Szemer\' edi, \textit{Integer sets containing no arithmetic progressions}, Acta Math. Hungar. \textbf{56} (1990) 155-158.

\bibitem{Z2} E. Szemer\' edi, \textit{On the sets of integers containing no $k$ elements in arithmetic progression}, Acta Arith. \textbf{27} (1975) 199-245.

\bibitem{V} P.Varnavides, \textit{On certain sets of positive density}, J.London Math.Soc. \textbf{34} (1959) 358-360.



\end{thebibliography}
\end{document}